\newtheorem{theorem}{Theorem}
\newtheorem{algorithm}[theorem]{Algorithm}
\newtheorem{lemma}[theorem]{Lemma}
\newenvironment{problem}{\pb\rm}{\endpb}
\newenvironment{remark}{\rem\rm}{\endrem}
\newcounter{unnumber}
\newenvironment{proof}{\prf\rm}{\hfill{$\blacksquare$}\endprf}
\newcommand{\R}{\mathbb{R}}%
\newcommand{\N}{\mathbb{N}}%
\newcommand{\ol}{\overline}%
\DeclareMathOperator*\inte{int}%
\DeclareMathOperator*\sqri{sqri}%
\DeclareMathOperator*\ri{ri}%
\DeclareMathOperator*\dom{dom}%
\DeclareMathOperator*\B{\overline{\R}}%
\DeclareMathOperator*\ran{ran}%
\DeclareMathOperator*\id{Id}%
\DeclareMathOperator*\argmin{argmin}
\title{Fixing and extending some recent results on the ADMM algorithm}
\author{Sebastian Banert \thanks{Lund University, Faculty of Engineering (LTH), Department of Automatic Control, Box 118, 221 00 Lund, Sweden,
email: sebastian.banert@control.lth.se. Research supported by FWF (Austrian Science Fund), project I 2419-N32, and by Wallenberg AI, Autonomous Systems and Software Program (WASP).}
\and Radu Ioan Bo\c{t} \thanks{University of Vienna, Faculty of Mathematics, Oskar-Morgenstern-Platz 1, A-1090 Vienna, Austria,
email: radu.bot@univie.ac.at. Research partially supported by FWF (Austrian Science Fund), project I 2419-N32.} \and
Ern\"{o} Robert Csetnek \thanks {University of Vienna, Faculty of Mathematics, Oskar-Morgenstern-Platz 1, A-1090 Vienna, Austria,
email: ernoe.robert.csetnek@univie.ac.at. Research supported by FWF (Austrian Science Fund), projects M 1682-N25 and P 29809-N32.}}
\begin{document}
\maketitle

\noindent \textbf{Abstract.} We investigate the techniques and ideas used in the convergence analysis of two proximal ADMM algorithms for 
solving convex optimization problems involving compositions with linear operators. Besides this, we formulate a variant of the ADMM algorithm that is able to handle convex optimization problems involving an additional smooth function in its objective, 
and which is evaluated through its gradient. Moreover, in each iteration we allow the use of variable metrics, while the investigations are carried out in the setting of infinite dimensional Hilbert spaces.  This algorithmic scheme is investigated from the 
point of view of its convergence properties.\vspace{1ex}

\noindent \textbf{Key Words.} ADMM algorithm, Lagrangian, saddle points, variable metrics, positive semidefinite operators  \vspace{1ex}

\noindent \textbf{AMS subject classification.} 47H05, 65K05, 90C25

\section{Introduction}\label{sec1}
One of the most popular numerical algorithms for solving optimization problems of the form 
\begin{equation}\label{prim} \inf_{x\in\R^n}\{f(x)+g(Ax)\},
\end{equation}
where  $f:\R^n\rightarrow\B:=\R\cup\{\pm\infty\}$ and $g:\R^m\rightarrow\B$ 
are proper, convex, lower semicontinuous functions and $A:\R^n\rightarrow\R^m$ is a linear operator, 
is the alternating direction method of multipliers (ADMM). Here, the spaces $\R^n$ and $\R^m$ are equipped with their usual inner products and induced norms, which we both denote by $\langle \cdot,\cdot\rangle$ and $\| \cdot \|$, respectively, as there is no risk of confusion.

By introducing an auxiliary variable $z$ one can rewrite \eqref{prim} as  
\begin{equation}\label{prim-x-z}  \inf_{\substack{(x,z)\in\R^n\times\R^m\\Ax-z=0}}\{f(x)+g(z)\}.
\end{equation}

The Lagrangian associated with problem \eqref{prim-x-z} is 
$$l:\R^n \times \R^m \times \R^m \rightarrow \B, \ l(x,z,y)=f(x)+g(z)+\langle y,Ax-z\rangle,$$ 
and we say that $(x^*,z^*,y^*)\in\R^n\times\R^m\times\R^m$ is a saddle point of the Lagrangian, if 
\begin{equation}\label{saddle-point-def} l(x^*,z^*,y)\leq l(x^*,z^*,y^*)\leq l(x,z,y^*) \ \forall (x,z,y)\in\R^n\times\R^m\times\R^m.
\end{equation}

It is known that $(x^*,z^*,y^*)$ is a saddle point of $l$ if and only if $z^*=Ax^*$, $(x^*,z^*)$ is an optimal solution of \eqref{prim-x-z},  
$y^*$ is an optimal solution of the Fenchel dual problem (see \cite{bauschke-book, bo-van, b-hab, EkTem, Zal-carte}) to \eqref{prim}
\begin{equation}\label{dual} \sup_{y\in\R^m}\{-f^*(-A^Ty)-g^*(y)\},\end{equation}
and the optimal objective values of \eqref{prim} and \eqref{dual} coincide. Notice that $f^*$ and $g^*$ are the conjugates of 
$f$ and $g$, defined by $f^*(u)=\sup_{x\in\R^n}\{\langle u,x\rangle-f(x)\}$ for all $u \in \R^n$  and $g^*(y)=\sup_{z\in\R^m}\{\langle y,z\rangle-g(z)\}$ for all $y\in\R^m$, respectively. 

If \eqref{prim} has an optimal solution and $A(\ri(\dom f))\cap\ri\dom g\neq\emptyset$, then the set of saddle points of $l$
is nonempty. Here, we denote by $\ri(S)$ the relative interior of a convex set $S$, which is the interior of $S$ relative to its affine hull. 

For a fixed real number $c >0$ we further consider the augmented Lagrangian associated with problem \eqref{prim-x-z}, which is defined as
$$L_c:\R^n \times \R^m \times \R^m \rightarrow \B, \ L_c(x,z,y)=f(x)+g(z)+\langle y,Ax-z\rangle + \frac{c}{2} \|Ax-z \|^2.$$ 
The ADMM algorithm reads: 

\begin{algorithm}\label{admm} Choose $(z^0,y^0)\in \R^m\times\R^m$ and $c>0$. 
For all $k\geq 0$ generate the sequence $(x^k,z^k,y^k)_{k \geq 0}$ as follows:
 \begin{eqnarray} x^{k+1} & \in & \argmin_{x\in\R^n} L_c(x,z^k,y^k)
 = \argmin_{x\in\R^n} \left\{f(x)+\frac{c}{2}\|Ax-z^k+c^{-1}y^k\|^2 \right\} \label{admm-x}\\
z^{k+1} & = &\argmin_{z\in\R^m} L_c(x^{k+1},z,y^k) = \argmin_{z\in\R^m} \left\{g(z)+\frac{c}{2}\|Ax^{k+1}-z+c^{-1}y^k\|^2 \right\} \label{admm-z}\\
y^{k+1} & = & y^k+c(Ax^{k+1}-z^{k+1}).\label{admm-y}
\end{eqnarray}
\end{algorithm}

If $A$ has full column rank, then the set of minimizers in \eqref{admm-x} is a singleton, as is the set of minimizers in \eqref{admm-z} without any further assumption, and the sequence $(x^k,z^k,y^k)_{k \geq 0}$ generated by Algorithm \ref{admm} converges to a saddle point of the Lagrangian $l$. The alternating direction method of multipliers was first introduced in  \cite{gabaymercier} and \cite{fortinglowinski}. Gabay has shown in \cite{gabay} (see also \cite{ecb}) that ADMM is nothing else than the Douglas-Rachford algorithm applied to the monotone inclusion problem
$$0 \in \partial (f^* \circ (-A^T))(y) + \partial g^*(y) $$
For a proper function $k : \R^n \rightarrow \B$,   the set-valued operator defined by $\partial k(x) := \{u \in \R^n : k(t) - k(x) \geq\langle u,t-x\rangle \ \forall t \in \R^n\}$, for $k(x) \in \R$, and $\partial k(x) := \emptyset$, otherwise, denotes its (convex) subdifferential.

One of the limitations of the ADMM algorithm comes from the presence of the term $Ax$ in the update rule of $x^{k+1}$. While in \eqref{admm-z} a proximal step for the function $g$ is taken, in \eqref{admm-x} the function $f$ and the operator $A$ are not evaluated independently, 
which makes the ADMM algorithm less attractive for implementations than the primal-dual splitting algorithms (see, for instance, 
\cite{b-c-h, b-c-h2, bothendrich, ch-pck, condat2013, vu}). Despite of this fact, the ADMM algorithm has been widely used for solving convex optimization problems arising in real-life applications (see, for instance, \cite{bpcpe, esser}). For a version of the ADMM algorithm with inertial and memory effects we refer the reader to \cite{b-c-inertial-admm}.

In order to overcome the above-mentioned drawback of the classical ADMM method and to increase its flexibility, the following so-called alternating direction proximal method of multipliers  has been considered in \cite{shefi-teboulle2014} (see also \cite{fpst, lst}):

\begin{algorithm}\label{algs-sh-teb} Choose $(x^0,z^0,y^0)\in\R^n\times\R^m\times\R^m$ and $c>0$. 
For all $k\geq 0$ generate the sequence $(x^k,z^k,y^k)_{k \geq 0}$ as follows:
\begin{eqnarray} x^{k+1} & \in &
\argmin_{x\in\R^n} \left\{f(x)+\frac{c}{2}\|Ax-z^k+c^{-1}y^k\|^2 + \frac{1}{2}\|x-x^k\|_{M_1}^2\right\} \label{sh-teb-x}\\
z^{k+1} & = &\argmin_{z\in\R^m} \left\{g(z)+\frac{c}{2}\|Ax^{k+1}-z+c^{-1}y^k\|^2 + \frac{1}{2}\|z-z^k\|_{M_2}^2\right\} \label{sh-teb-z}\\
y^{k+1} & = & y^k+c(Ax^{k+1}-z^{k+1}).\label{sh-teb-y}
\end{eqnarray}
\end{algorithm}

Here, $M_1\in\R^{n\times n}$ and $M_2\in\R^{m\times m}$ are symmetric positive semidefinite matrices and $\|u\|_{M_i}^2=\langle u,M_iu\rangle$ denotes the squared seminorm induced by $M_i$, for $i\in\{1,2\}$. 

Indeed, for $M_1 = M_2=0$, Algorithm \ref{algs-sh-teb} becomes the classical ADMM method, while for
$M_1 = \mu_1 \id$ and $M_2 = \mu_2 \id$ with $\mu_1, \mu_2 >0$ and $\id$  denoting the corresponding matrix, it becomes the algorithm proposed and investigated in \cite{eckstein}. Furthermore, if $M_1 = \tau^{-1}\id - cA^TA$ with $\tau > 0$ 
such that $c\tau \|A\|^2 < 1$ and $M_2=0$, then one can show that Algorithm \ref{algs-sh-teb} is equivalent to one of the primal-dual algorithms formulated in \cite{condat2013}.

The sequence $(z^k)_{k\geq 0}$ generated in Algorithm \ref{algs-sh-teb}  is uniquely determined due to the fact that the objective function in 
\eqref{sh-teb-z} is lower semicontinuous and strongly convex. On the other hand, the set of minimizers in \eqref{sh-teb-x} is in general not a singleton and it can be even empty. However, if one imposes that $M_1 + A^*A$ is positive definite, then $(x^k)_{k\geq 0}$ will be uniquely determined, too. 

Shefi and Teboulle provide in \cite{shefi-teboulle2014} in connection to  Algorithm \ref{algs-sh-teb} an ergodic convergence rate result for a primal-dual gap function formulated in terms of the Lagrangian $l$,
from which they deduce a global convergence rate result for the sequence of function values $(f(x^k) + g(Ax^k))_{k \geq 0}$ to the optimal objective value of \eqref{prim}, when $g$ is Lipschitz continuous. Furthermore, they formulate a global  convergence rate result for the sequence $(\|Ax^k-z^k\|)_{k \geq 0}$ to $0$. Finally, Shefi and Teboulle prove the convergence of the sequence $(x^k,z^k,y^k)_{k \geq 0}$ to a saddle point of the Lagrangian $l$, provided that either $M_1=0$ and $A$ has full column rank or $M_1$ is positive definite.

Algorithm \ref{algs-sh-teb} from \cite{shefi-teboulle2014} represents the starting point of our investigations. More precisely, in this paper:

$\bullet$ we point out some flaws in the proof of a statement in \cite{shefi-teboulle2014}, which is fundamental for the derivation of the global convergence rate of $(\|Ax^k-z^k\|)_{k \geq 0}$ to $0$ and of the convergence of the sequence $(x^k,z^k,y^k)_{k \geq 0}$;

$\bullet$ we show how the statement in cause can be proved by using different techniques;

$\bullet$ we formulate a variant of Algorithm \ref{algs-sh-teb} for solving convex optimization problems  in infinite dimensional Hilbert spaces involving an additional smooth function in their objective, that we evaluate through its gradient, and which allows in each iteration the use of variable metrics;

$\bullet$ we prove  an ergodic convergence rate result for this algorithm involving a primal-dual gap function formulated in terms of the associated Lagrangian $l$ and a convergence result for the sequence of iterates to a saddle point of $l$.

\section{Fixing some results from \cite{shefi-teboulle2014} related to the convergence analysis for Algorithm \ref{algs-sh-teb}}\label{sec2}

In this section we point out several flaws that have been made in \cite{shefi-teboulle2014} when deriving a fundamental result for both the rate of convergence of the sequence  $(\|Ax^k-z^k\|)_{k \geq 0}$ to $0$ and the convergence of the sequence $(x^k,z^k,y^k)_{k \geq 0}$ to a saddle point of the Lagrangian $l$. We also show how these arguments can be fixed by relying on some of the building blocks of the analysis we will carry out in Section \ref{sec3}.

To proceed, we first recall some results from \cite{shefi-teboulle2014}. 
We start with a statement that follows from the variational characterization of the minimizers of \eqref{sh-teb-x}-\eqref{sh-teb-z}.  

\begin{lemma}(see \cite[Lemma 4.2]{shefi-teboulle2014}) \label{l42-sh-teb} Let $(x^k,z^k,y^k)_{k\geq 0}$ be a sequence generated by 
Algorithm \ref{algs-sh-teb}. Then for all $k\geq 0$ and for all $(x,z,y)\in\R^n\times\R^m\times\R^m$ it holds
\begin{eqnarray*}l(x^{k+1},z^{k+1},y)& \leq & l(x,z,y^{k+1})+c\langle z^{k+1}-z^k,A(x-x^{k+1})\rangle+\\
&&+\frac{1}{2}\left(\|x-x^k\|_{M_1}^2-\|x-x^{k+1}\|_{M_1}^2+\|z-z^k\|_{M_2}^2-\|z-z^{k+1}\|_{M_2}^2\right)\\
&&+\frac{1}{2}\left(c^{-1}\|y-y^k\|^2-c^{-1}\|y-y^{k+1}\|^2\right)\\
&&-\frac{1}{2}\left(\|x^{k+1}-x^k\|_{M_1}^2+\|z^{k+1}-z^k\|_{M_2}^2+c^{-1}\|y^{k+1}-y^k\|^2\right).
\end{eqnarray*}
\end{lemma}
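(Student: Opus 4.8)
The plan is to derive the stated inequality from the optimality conditions (first-order, i.e.\ subdifferential) of the two minimization subproblems \eqref{sh-teb-x} and \eqref{sh-teb-z}, and then to assemble these two pieces together with the definition of $y^{k+1}$ in \eqref{sh-teb-y}. First I would write the variational characterization of $x^{k+1}$: since $x^{k+1}$ minimizes $f(x)+\tfrac{c}{2}\|Ax-z^k+c^{-1}y^k\|^2+\tfrac12\|x-x^k\|_{M_1}^2$, convexity gives
\[
0\in\partial f(x^{k+1})+cA^T(Ax^{k+1}-z^k+c^{-1}y^k)+M_1(x^{k+1}-x^k),
\]
so there is $\xi^{k+1}\in\partial f(x^{k+1})$ with $\xi^{k+1}=-A^T\bigl(y^k+c(Ax^{k+1}-z^k)\bigr)-M_1(x^{k+1}-x^k)$. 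Using the subgradient inequality for $f$ at $x^{k+1}$ tested against an arbitrary $x$, this yields
\[
f(x)\ge f(x^{k+1})+\bigl\langle \xi^{k+1},x-x^{k+1}\bigr\rangle .
\]
Similarly, $z^{k+1}$ minimizes $g(z)+\tfrac{c}{2}\|Ax^{k+1}-z+c^{-1}y^k\|^2+\tfrac12\|z-z^k\|_{M_2}^2$, giving $\eta^{k+1}\in\partial g(z^{k+1})$ with $\eta^{k+1}=y^k+c(Ax^{k+1}-z^{k+1})-M_2(z^{k+1}-z^k)$, and by \eqref{sh-teb-y} the first two terms are exactly $y^{k+1}$, so $\eta^{k+1}=y^{k+1}-M_2(z^{k+1}-z^k)$, and $g(z)\ge g(z^{k+1})+\langle\eta^{k+1},z-z^{k+1}\rangle$.

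Next I would combine these. Adding the two subgradient inequalities and adding and subtracting $\langle y,Ax^{k+1}-z^{k+1}\rangle$ and $\langle y^{k+1},Ax-z\rangle$ appropriately, I can reorganize the inner-product terms to produce exactly $l(x^{k+1},z^{k+1},y)$ on the left and $l(x,z,y^{k+1})$ on the right (recall $l(x,z,y)=f(x)+g(z)+\langle y,Ax-z\rangle$). The algebra here is the routine but slightly delicate part: the term $\langle\xi^{k+1},x-x^{k+1}\rangle$ contains $-\langle A^T(y^k+c(Ax^{k+1}-z^k)),x-x^{k+1}\rangle$; I want this to become $-\langle y^{k+1},A(x-x^{k+1})\rangle$ plus a correction, and $y^k+c(Ax^{k+1}-z^k)=y^{k+1}+c(z^{k+1}-z^k)$, which is precisely where the cross term $c\langle z^{k+1}-z^k,A(x-x^{k+1})\rangle$ in the statement comes from. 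The remaining bookkeeping is to push all the $M_1$, $M_2$ and (via the $y$-update, which gives $Ax^{k+1}-z^{k+1}=c^{-1}(y^{k+1}-y^k)$) the $c^{-1}$ terms into the quadratic expressions.

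The final step is the standard three-point identity: for any symmetric positive semidefinite $M$ and the seminorm it induces,
\[
2\langle M(a-b),a-d\rangle=\|a-d\|_M^2-\|b-d\|_M^2+\|a-b\|_M^2,
\]
applied with $M=M_1$, $(a,b,d)=(x^{k+1},x^k,x)$, with $M=M_2$, $(a,b,d)=(z^{k+1},z^k,z)$, and with $M=c^{-1}I$ (using $Ax^{k+1}-z^{k+1}=c^{-1}(y^{k+1}-y^k)$), $(a,b,d)=(y^{k+1},y^k,y)$. Substituting these identities converts the raw inner products produced in the previous step into exactly the telescoping differences $\|x-x^k\|_{M_1}^2-\|x-x^{k+1}\|_{M_1}^2$, etc., together with the negative quadratic remainder $-\tfrac12\bigl(\|x^{k+1}-x^k\|_{M_1}^2+\|z^{k+1}-z^k\|_{M_2}^2+c^{-1}\|y^{k+1}-y^k\|^2\bigr)$, which is the last line of the claimed inequality. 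The main obstacle is purely organizational: keeping the numerous inner-product and quadratic terms correctly signed and correctly paired so that everything collapses into the stated form; there is no conceptual difficulty once the three-point identity is invoked in the right places, and positive semidefiniteness of $M_1,M_2$ is used only to guarantee that $\|\cdot\|_{M_i}$ is a genuine seminorm (no definiteness is needed for this lemma).
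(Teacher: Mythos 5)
Your proposal is correct and follows essentially the same route the paper uses: the paper states this lemma without proof (citing Shefi--Teboulle) but proves its generalization, Lemma~\ref{l42-sh-teb-h}, by exactly your argument --- subdifferential optimality conditions for the two subproblems, the substitution $y^k+c(Ax^{k+1}-z^k)=y^{k+1}+c(z^{k+1}-z^k)$ to produce the cross term, and the three-point identity (including the $M=c^{-1}I$ instance via $Ax^{k+1}-z^{k+1}=c^{-1}(y^{k+1}-y^k)$) to convert the inner products into the telescoping differences and the negative quadratic remainder. Specializing that proof to $h=0$ and constant $M_1,M_2$ gives precisely your sketch, so no further comment is needed.
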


Furthermore, by invoking the monotonicity of the convex subdifferential of $g$, in \cite{shefi-teboulle2014} the following estimation is derived.

\begin{lemma}(see \cite[Proposition 5.3(b)]{shefi-teboulle2014}) \label{p53-sh-teb} Let $(x^k,z^k,y^k)_{k\geq 0}$ be a sequence generated 
by Algorithm \ref{algs-sh-teb}. Then for all $k\geq 1$ and for all $(x,z)\in\R^n\times\R^m$ it holds
\begin{align*}
c\langle z^{k+1}-z^k,A(x-x^{k+1})\rangle\leq  & \frac{c}{2}\left(\|z-z^k\|^2-\|z-z^{k+1}\|^2+\|Ax-z\|^2\right)+\\
& \frac{1}{2}\left(\|z^{k-1}-z^k\|_{M_2}^2-\|z^k-z^{k+1}\|_{M_2}^2\right).
\end{align*}
\end{lemma}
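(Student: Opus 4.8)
The plan is to invoke Fermat's rule for the $z$-minimization step \eqref{sh-teb-z} at two consecutive iterations and then to exploit the monotonicity of $\partial g$, as the statement itself suggests. First I would record the optimality condition for \eqref{sh-teb-z}: since the two quadratic terms are finite-valued and differentiable everywhere, the subdifferential sum rule applies and the minimality of $z^{k+1}$ is equivalent to
$$0 \in \partial g(z^{k+1}) - c\bigl(Ax^{k+1}-z^{k+1}+c^{-1}y^k\bigr) + M_2(z^{k+1}-z^k).$$
Using the identity $c(Ax^{k+1}-z^{k+1})+y^k = y^{k+1}$ that comes from \eqref{sh-teb-y}, this reads $y^{k+1}-M_2(z^{k+1}-z^k)\in\partial g(z^{k+1})$. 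Performing the same computation at iteration $k$ gives, for $k\geq 1$, the relation $y^{k}-M_2(z^{k}-z^{k-1})\in\partial g(z^{k})$; this is precisely where the hypothesis $k\geq 1$ enters, since it requires $z^{k-1}$ to be available.

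Next I would feed these two subgradients, evaluated at the base points $z^{k+1}$ and $z^k$, into the monotonicity inequality for $\partial g$, obtaining
$$\langle y^{k+1}-y^k,\, z^{k+1}-z^k\rangle \;\geq\; \|z^{k+1}-z^k\|_{M_2}^2 - \langle M_2(z^k-z^{k-1}),\, z^{k+1}-z^k\rangle.$$
Since $M_2$ is positive semidefinite, $\|\cdot\|_{M_2}$ is a genuine seminorm, so the Cauchy--Schwarz and Young inequalities bound the last inner product by $\tfrac12\|z^{k-1}-z^k\|_{M_2}^2+\tfrac12\|z^k-z^{k+1}\|_{M_2}^2$; substituting and rearranging yields
$$-\langle z^{k+1}-z^k,\, y^{k+1}-y^k\rangle \;\leq\; \tfrac12\|z^{k-1}-z^k\|_{M_2}^2 - \tfrac12\|z^k-z^{k+1}\|_{M_2}^2,$$
which is the $M_2$-part of the claimed estimate.

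Finally I would rewrite the left-hand side of the assertion. From \eqref{sh-teb-y} we have $Ax^{k+1}=z^{k+1}+c^{-1}(y^{k+1}-y^k)$, hence
$$c\langle z^{k+1}-z^k,\, A(x-x^{k+1})\rangle = c\langle z^{k+1}-z^k,\, Ax-z^{k+1}\rangle - \langle z^{k+1}-z^k,\, y^{k+1}-y^k\rangle.$$
For the first summand I would split $Ax-z^{k+1}=(Ax-z)+(z-z^{k+1})$, bound $\langle z^{k+1}-z^k,\, Ax-z\rangle \leq \tfrac12\|z^{k+1}-z^k\|^2+\tfrac12\|Ax-z\|^2$ by Young's inequality, and expand $\langle z^{k+1}-z^k,\, z-z^{k+1}\rangle = \tfrac12\bigl(\|z-z^k\|^2-\|z^{k+1}-z^k\|^2-\|z-z^{k+1}\|^2\bigr)$ via the elementary polarization identity; the two occurrences of $\tfrac12\|z^{k+1}-z^k\|^2$ cancel, leaving $c\langle z^{k+1}-z^k,\, Ax-z^{k+1}\rangle \leq \tfrac{c}{2}\bigl(\|z-z^k\|^2-\|z-z^{k+1}\|^2+\|Ax-z\|^2\bigr)$. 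Adding this to the $M_2$-estimate of the previous paragraph gives exactly the claim. The whole argument is essentially bookkeeping; the only delicate points are carrying the variable-metric contribution $M_2(z^{k+1}-z^k)$ correctly through Fermat's rule, and choosing the splitting $Ax-z^{k+1}=(Ax-z)+(z-z^{k+1})$ so that the coupling term $\|Ax-z\|^2$ appears with precisely the constant $c/2$.
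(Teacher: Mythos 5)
Your proof is correct and follows exactly the route the paper indicates for this lemma (which it recalls from Shefi--Teboulle without reproving): the optimality condition \eqref{sh-teb-z} at two consecutive iterations, the monotonicity of $\partial g$, a Young-type bound in the $M_2$-seminorm, and the substitution $Ax^{k+1}=z^{k+1}+c^{-1}(y^{k+1}-y^k)$ from \eqref{sh-teb-y}. The same computation appears almost verbatim in the paper's own proof of Theorem \ref{cong-it-sh-teb} under assumption (III), so nothing further is needed.
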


By taking $(x,z,y):=(x^*,z^*,y^*)$ in Lemma \ref{l42-sh-teb}, where $(x^*,z^*,y^*)$ is a saddle point of the Lagrangian $l$, and  by using 
the inequality (see \eqref{saddle-point-def}) $$l(x^{k+1},z^{k+1},y^*)\geq l(x^*,z^*,y^{k+1}) \ \forall k\geq 0,$$ and the estimation 
in Lemma \ref{p53-sh-teb}, one easily obtains the following result.

\begin{lemma} \label{l51-sh-teb} Let $(x^*,z^*,y^*)$ be a saddle point of the Lagrangian $l$ associated with \eqref{prim}, $M_1,M_2$ be symmetric positive semidefinite matrices and $c>0$. Let $(x^k,z^k,y^k)_{k\geq 0}$ be a sequence generated 
by Algorithm \ref{algs-sh-teb}. Then for all $k\geq 1$ the following inequality holds
\begin{align}
& \ \|x^{k+1}-x^k\|_{M_1}^2+\|z^{k+1}-z^k\|^2_{M_2}+c^{-1}\|y^{k+1}-y^k\|^2 + \\
&\ \|x^*-x^{k+1}\|_{M_1}^2+\|z^*-z^{k+1}\|_{M_2+cI_m}^2+c^{-1}\|y^*-y^{k+1}\|^2+\|z^{k+1}-z^k\|_{M_2}^2\\
\leq & \ \|x^*-x^k\|_{M_1}^2+\|z^*-z^k\|_{M_2+cI_m}^2+c^{-1}\|y^*-y^k\|^2+\|z^k-z^{k-1}\|_{M_2}^2.\label{eql5}
\end{align}
\end{lemma}
By using the notations from \cite[Section 5.3]{shefi-teboulle2014}, namely 
$$v^{k+1}:=\|x^{k+1}-x^k\|_{M_1}^2+\|z^{k+1}-z^k\|^2_{M_2+c\id}+c^{-1}\|y^{k+1}-y^k\|^2 \ \forall k \geq 0$$
and $$u^k:=\|x^*-x^k\|_{M_1}^2+\|z^*-z^k\|_{M_2+c\id}^2+c^{-1}\|y^*-y^k\|^2+\|z^k-z^{k-1}\|_{M_2}^2 \ \forall k \geq 1,$$
the inequality stated in Lemma \ref{l51-sh-teb} can be equivalently written as 
\begin{equation}\label{v} v^{k+1}-c\|z^{k+1}-z^k\|^2\leq u^k-u^{k+1} \ \forall k\geq 1.
\end{equation}
However, in \cite[Lemma 5.1, (5.37)]{shefi-teboulle2014}, instead of \eqref{v}, it is without proof assumed that
\begin{equation}\label{correct}
v^{k+1}\leq u^k-u^{k+1} \ \forall k\geq 1.
\end{equation}
Since the the sequence $(v^k)_{k \geq 0}$ is monotonically decreasing, statement \eqref{correct}, in combination with straightforward telescoping arguments, leads to the fact that $(v^k)_{k \geq 0}$ converges to zero with a rate of convergence of $\cal{O}$$(1/\sqrt{k})$. This implies that $(\|Ax^k-z^k\|)_{k\geq 0}$ converges to zero with a rate of convergence of $\cal{O}$$(1/\sqrt{k})$ (see \cite[Theorem 5.4]{shefi-teboulle2014}). In addition, statement \eqref{v} is used in \cite[Theorem 5.6]{shefi-teboulle2014} to prove the convergence of the sequence $(x^k,z^k,y^k)_{k\geq 0}$ to a saddle point of the Lagrangian $l$. However, the techniques used in \cite{shefi-teboulle2014}, involving function values and the saddle point inequality, do not lead to \eqref{correct}, but to the weaker inequalitiy \eqref{v}. 

In the following we will show that one can in fact derive \eqref{correct},  however, to this end one needs to use different techniques. These are described in detail in the next section; here we will just show how do they lead to \eqref{correct}. 
We would like to notice that, differently from \cite{shefi-teboulle2014}, in our analysis we will only use properties related to the fact that the convex subdifferential of a proper, convex and lower semicontinuous function is a monotone set-valued operator.

We start our analysis with relation \eqref{fey-var-h0}, which in case $h=0$, $L=0$, $M_1^k=M_1 \succcurlyeq 0$ and $M_2^k=M_2 \succcurlyeq 0$ for all $k \geq 0$ (see the setting of Section \ref{sec3}) reads
\begin{align}
 c\|z^k-Ax^{k+1}\|^2 + \|x^k-x^{k+1}\|_{M_1}^2 + \|z^k-z^{k+1}\|_{M_2}^2 & \leq \nonumber\\
\|x^k-x^*\|_{M_1}^2+\|z^k-Ax^*\|_{M_2+c\id}^2+\frac{1}{c}\|y^k-y^*\|^2 & \nonumber\\ 
- \left(\|x^{k+1}-x^*\|_{M_1}^2+\|z^{k+1}-Ax^*\|_{M_2+c\id}^2+\frac{1}{c}\|y^{k+1}-y^*\|^2\right).&  
 \label{fey-var-h00}
\end{align}
for all $k \geq 0$. Using that
\begin{align*}
 c\|z^k-Ax^{k+1}\|^2 = \ & c \left \|z^k - z^{k+1} - \frac{1}{c}(y^{k+1} - y^k) \right \|^2\\
 = \ & c\|z^k-z^{k+1}\|^2 + \frac{1}{c} \|y^{k+1} - y^k\|^2 + 2 \langle z^{k+1}- z^k, y^{k+1} - y^k\rangle,
\end{align*}
we obtain from  \eqref{fey-var-h00} that
\begin{align}
 2 \langle z^{k+1}- z^k, y^{k+1} - y^k\rangle + \|x^k-x^{k+1}\|_{M_1}^2 + \|z^k-z^{k+1}\|_{M_2+c\id}^2 + \frac{1}{c} \|y^{k+1} - y^k\|^2 & \leq \nonumber\\
\|x^k-x^*\|_{M_1}^2+\|z^k-Ax^*\|_{M_2+c\id}^2+\frac{1}{c}\|y^k-y^*\|^2 & \nonumber\\ 
- \left(\|x^{k+1}-x^*\|_{M_1}^2+\|z^{k+1}-Ax^*\|_{M_2+c\id}^2+\frac{1}{c}\|y^{k+1}-y^*\|^2\right)&  
 \label{fey-var-h01}
\end{align}
for all $k \geq 0$. By taking into account that, according to \eqref{coniter},
\begin{align*}
\langle z^{k+1}-z^k,y^{k+1}-y^k\rangle & \geq \frac{1}{2}\|z^{k+1}-z^k\|_{M_2}^2-\frac{1}{2}\|z^{k}-z^{k-1}\|_{M_2}^2
\end{align*}
for all $k \geq 1$, it yields
\begin{align*}
 \|x^k-x^{k+1}\|_{M_1}^2 + \|z^k-z^{k+1}\|_{M_2+c\id}^2 + \frac{1}{c} \|y^{k+1} - y^k\|^2 & \leq \nonumber\\
\|x^k-x^*\|_{M_1}^2+\|z^k-Ax^*\|_{M_2+c\id}^2+\frac{1}{c}\|y^k-y^*\|^2 + \|z^k - z^{k-1}\|^2_{M_2} & \nonumber\\ 
- \left(\|x^{k+1}-x^*\|_{M_1}^2+\|z^{k+1}-Ax^*\|_{M_2+c\id}^2+\frac{1}{c}\|y^{k+1}-y^*\|^2 +  \|z^{k+1} - z^{k}\|^2_{M_2}\right)&,  
\end{align*}
which is nothing else than \eqref{correct}.

From here, by using that $v^{k+1} \leq v^k$ for all $k \geq 0$ and straightforward telescoping arguments, it follows immediately that $(\|Ax^k-z^k\|)_{k\geq 0}$ converges to zero with a rate of $\cal{O}$$(1/\sqrt{k})$.

We will see in the following section that the inequality \eqref{fey-var-h0} will play an essential role also in the convergence analysis of the sequence of iterates. When applied to the particular context of the optimization problem \eqref{prim} and Algorithm \ref{admm}, Theorem \ref{cong-it} provides a rigorous formulation and a correct and clear proof of the convergence result stated in \cite[Theorem 5.6]{shefi-teboulle2014}.

\section{A variant of the ADMM algorithm in the presence of a smooth function and by involving variable metrics}\label{sec3}

In this section we propose an extension of the ADMM algorithm considered in \cite{shefi-teboulle2014} that we also investigate from the perspective of its convergence properties. This extension is twofold: 
on the one hand, we consider an additional convex differentiable function in the objective of the optimization problem \eqref{prim}, which is evaluated in the algorithm through its gradient, and on the other hand, instead of fixed matrices $M_1,M_2$, we use different matrices in each iteration. Furthermore, we change the setting to infinite dimensional Hilbert spaces. 
We start by describing the problem under investigation:

\begin{problem}\label{admm-p1} Let ${\cal H}$ and ${\cal G}$ be real Hilbert spaces, $f:{\cal H}\rightarrow\B$, 
$g: \cal G\rightarrow\B$ be proper, convex and lower semicontinuous functions, $h:{\cal H}\rightarrow\R$ a convex and 
Fr\'{e}chet differentiable function with $L$-Lipschitz continuous gradient (where $L\geq 0$) and 
$A:{\cal H}\rightarrow{\cal G}$ a linear continuous operator. The Lagrangian associated with the convex optimization problem 
\begin{equation}\label{prim-h} \inf_{x\in{\cal H}}\{f(x)+h(x)+g(Ax)\}
\end{equation}
is 
$$l: {\cal H} \times {\cal H} \times {\cal G} \rightarrow \overline \R, \  l(x,z,y)=f(x)+h(x)+g(z)+\langle y,Ax-z\rangle.$$
We say that 
$(x^*,z^*,y^*)\in{\cal H}\times{\cal G}\times{\cal G}$ is a saddle point of the Lagrangian $l$, if the following inequalities hold 
\begin{equation}\label{saddle-point-def-h} l(x^*,z^*,y)\leq l(x^*,z^*,y^*)\leq l(x,z,y^*) \ \forall (x,z,y)\in{\cal H}\times{\cal G}\times{\cal G}.
\end{equation}

Notice that $(x^*,z^*,y^*)$ is a saddle point if and only if $z^*=Ax^*$, $x^*$ is an optimal solution of \eqref{prim-h},  
$y^*$ is an optimal solution of the Fenchel dual problem to \eqref{prim-h}
\begin{equation}\label{dual-h} (D') \ \ \ \sup_{y\in{\cal G}}\{-(f^*\mathop{\Box} h^*)(-A^*y)-g^*(y)\},\end{equation}
and the optimal objective values of \eqref{prim-h} and \eqref{dual-h} coincide, 
where $A^*:{\cal G}\rightarrow{\cal H}$ is the adjoint operator defined by $\langle A^*v,x\rangle=\langle v, Ax\rangle$ for all 
$(v,x)\in {\cal G}\times{\cal H}$. The infimal convolution $f^*\mathop{\Box} h^*:{\cal H}\rightarrow\B$ is defined by 
$(f^*\mathop{\Box} h^*)(x)=\inf_{u\in {\cal H}}\{f^*(u)+h^*(x-u)\}$ for all $x\in {\cal H}$.
\end{problem}

For the reader's convenience, we discuss some situations which lead to the existence of saddle points. This is for instance the case when \eqref{prim-h} has an optimal solution and the Attouch-Br\'{e}zis qualification condition
\begin{equation}\label{reg-cond} 0\in\sqri(\dom g-A(\dom f))
\end{equation}
holds.
Here, for a convex set  $S\subseteq {\cal G}$,  we denote by
$$\sqri S:=\{x\in S:\cup_{\lambda>0}\lambda(S-x) \ \mbox{is a closed linear subspace of} \ {\cal G}\}$$
its strong quasi-relative interior. Notice that the classical interior is contained in the 
strong quasi-relative interior: $\inte S\subseteq\sqri S$, however, in general this inclusion may be strict. If ${\cal G}$ is finite-dimensional, then for a nonempty and convex set $S \subseteq   {\cal G}$, one has $\sqri S =\ri S$. Considering again the 
infinite dimensional setting, we remark that condition 
\eqref{reg-cond} is fulfilled if there exists $x'\in\dom f$ such that $Ax'\in \dom g$ and $g$ is continuous at $Ax'$. 

The optimality conditions for the primal-dual pair of optimization problems \eqref{prim-h}-\eqref{dual-h} read
\begin{equation}\label{opt-cond} -A^*y-\nabla h(x)\in\partial f(x) \mbox{ and } y\in\partial g(Ax). 
\end{equation}
This means that if  \eqref{prim-h} has an optimal solution $x^*\in{\cal H}$ and the qualification condition \eqref{reg-cond} is fulfilled, 
then there exists $y^*\in{\cal G}$, an optimal solution of \eqref{dual-h}, such that  \eqref{opt-cond} holds and $(x^*,Ax^*,y^*)$ is a saddle point of the Lagrangian $l$. Conversely, if the pair 
$(x^*,y^*)\in{\cal H}\times{\cal G}$ satisfies relation \eqref{opt-cond}, then $x^*$ is an optimal solution to  \eqref{prim-h}, 
$y^*$ is an optimal solution to \eqref{dual-h} and $(x^*,Ax^*,y^*)$ is a saddle 
point of the Lagrangian $l$. For further considerations on convex duality  we invite the reader to consult 
\cite{bo-van, b-hab, bauschke-book, EkTem, Zal-carte}.

Furthermore, we discuss some conditions ensuring that \eqref{prim-h} has an optimal solution. 
Suppose that \eqref{prim-h} is feasible, which means that its optimal objective
value is not $+\infty$. The existence of optimal solutions to \eqref{prim-h} is guaranteed if,
for instance, $f+h$ is coercive (that is $\lim_{\|x\|\rightarrow\infty}(f+h)(x)=+\infty$)
and $g$ is bounded from below. Indeed, under these circumstances, the objective function of
\eqref{prim-h} is coercive and the statement follows via \cite[Corollary 11.15]{bauschke-book}. 
On the other hand, if $f+h$ is strongly convex, then the objective function of
\eqref{prim-h} is strongly convex, too, thus \eqref{prim-h} has a unique optimal solution (see \cite[Corollary 11.16]{bauschke-book}).

Some more notations are in order before we state the algorithm for solving Problem \ref{admm-p1}. We denote by ${\cal S_+}({\cal H})$ the family of operators $U:{\cal H}\rightarrow {\cal H}$ which are 
linear, continuous, self-adjoint and positive semidefinite. For $U\in {\cal S_+}({\cal H})$ we consider the semi-norm defined by 
$$\|x\|_U^2=\langle x,Ux\rangle \ \forall x\in {\cal H}.$$
We also make use of the Loewner partial ordering defined for $U_1,U_2\in {\cal S_+}({\cal H})$ by
$$U_1\succcurlyeq U_2\Leftrightarrow \|x\|_{U_1}^2\geq \|x\|_{U_2}^2  \ \forall x\in {\cal H}.$$
Finally, for $\alpha> 0$, we set  
$${\cal P}_{\alpha}({\cal H})=\{U\in {\cal S_+}({\cal H}): U\succcurlyeq \alpha\id \}.$$

\begin{algorithm}\label{alg-h-var} Let $M_1^k\in {\cal S_+}({\cal H})$ and $M_2^k\in {\cal S_+}({\cal G})$ for all $k\geq 0$. 
Choose $(x^0,z^0,y^0)\in{\cal H}\times{\cal G}\times{\cal G}$ and $c>0$. For all $k\geq 0$ generate the sequence $(x^k,z^k,y^k)_{k \geq 0}$ as follows:
\begin{eqnarray} x^{k+1} & \in &
\argmin_{x\in{\cal H}} \left\{f(x)+\langle x-x^k,\nabla h(x^k)\rangle +\frac{c}{2}\|Ax-z^k+c^{-1}y^k\|^2 + \frac{1}{2}\|x-x^k\|_{M_1^k}^2\right\} \label{h-var-x}\\
z^{k+1} & = &\argmin_{z\in{\cal G}} \left\{g(z)+\frac{c}{2}\|Ax^{k+1}-z+c^{-1}y^k\|^2 + \frac{1}{2}\|z-z^k\|_{M_2^k}^2\right\} \label{h-var-z}\\
y^{k+1} & = & y^k+c(Ax^{k+1}-z^{k+1}).\label{h-var-y}
\end{eqnarray}
\end{algorithm}

\begin{remark} (i) If $h=0$ and $M_1^k=M_1$, $M_2^k=M_2$ are constant in each iteration, then Algorithm \ref{alg-h-var} becomes  Algorithm \ref{algs-sh-teb}, which has been investigated in \cite{shefi-teboulle2014}. 

(ii) In order to ensure that the sequence $(x^k)_{k\geq 0}$ is uniquely determined one can assume that for all $k\geq 0$ 
there exists $\alpha_1^k>0$ such that $M_1^k + cA^*A \in {\cal P}_{\alpha_1^k}({\cal H})$.  

This is in particular  the case when
\begin{equation}\label{h}\exists \alpha>0 \mbox{ such that } A^*A\in {\cal P}_{\alpha}({\cal H}). 
\end{equation}
Relying on \cite[Fact 2.19]{bauschke-book}, on can see 
that \eqref{h} holds if and only if $A$ is injective and $\ran A^*$ is closed. Hence, in finite dimensional spaces, namely, if ${\cal H}=\R^n$ and 
${\cal G}=\R^m$, with $m\geq n\geq 1$, \eqref{h}  is nothing else than saying that $A$ has full column rank.

(iii) One of the pioneering works addressing proximal ADMM algorithms in Hilbert spaces, in the particular case when $h=0$ and $M_1^k$ and $M_2^k$ are equal for all $k \geq 0$ to the corresponding identity operators, is the paper by Attouch and Soueycatt \cite{as}. We also refer the reader to \cite{fpst, lst} for versions of the proximal ADMM algorithm stated in finite-dimensional spaces and with proximal terms induced by constant linear operators.
\end{remark}

\begin{remark}\label{rem-cond} We show that the particular choices $M_1^k=\frac{1}{\tau_k}\id-cA^*A$, for $\tau_k > 0$, and $M_2^k=0$ for all $k\geq 0$ lead to a primal-dual algorithm introduced in \cite{condat2013}.
Here $\id : {\cal H} \rightarrow {\cal H}$ denotes the identity operator on ${\cal H}$. Let $k \geq 0$ be fixed.
The optimality condition for \eqref{h-var-x} reads (for $x^{k+2}$): 
\begin{eqnarray*}0&\in&\partial f(x^{k+2})+cA^*(Ax^{k+2}-z^{k+1}+c^{-1}y^{k+1})+M_1^{k+1}(x^{k+2}-x^{k+1})+\nabla h(x^{k+1})\\
& = & \partial f(x^{k+2})+(cA^*A+M_1^{k+1})x^{k+2}+cA^*(-z^{k+1}+c^{-1}y^{k+1})-M_1^{k+1}x^{k+1}+\nabla h(x^{k+1}). 
\end{eqnarray*}
From \eqref{h-var-y} we have $$cA^*(-z^{k+1}+c^{-1}y^{k+1})=A^*(2y^{k+1}-y^k)-cA^*A x^{k+1},$$
hence 
\begin{equation} 0\in\partial f(x^{k+2})+(cA^*A+M_1^{k+1})(x^{k+2}-x^{k+1})+A^*(2y^{k+1}-y^k)+\nabla h(x^{k+1}).
\end{equation}
By taking into account the special choice of $M_1^k$ we obtain 
$$0\in\partial f(x^{k+2})+\frac{1}{\tau_{k+1}}\left(x^{k+2}-x^{k+1}\right)+A^*(2y^{k+1}-y^k)+\nabla h(x^{k+1}),$$
thus,
\begin{eqnarray}\!\!x^{k+2} \!\!\! & = & \!\! (\id+\tau_{k+1}\partial f)^{-1}\left(x^{k+1}-\tau_{k+1}\nabla h(x^{k+1})-\tau_{k+1}A^*(2y^{k+1}-y^k)\right)\nonumber\\
\!\!\!& = & \!\! \argmin_{x\in {\cal H}}\left\{f(x)+\frac{1}{2\tau_{k+1}}\left\|x-\left(x^{k+1}-\tau_{k+1}\nabla h(x^{k+1})
-\tau_{k+1}A^*(2y^{k+1}-y^k)\right)\right\|^2\right\}. \label{cond2}
\end{eqnarray}
Furthermore, from the optimality condition for \eqref{h-var-z} we obtain 
\begin{equation}\label{opt-cond-g-rem} c(Ax^{k+1}-z^{k+1}+c^{-1}y^k)+M_2^k(z^k-z^{k+1})\in\partial g(z^{k+1}),
\end{equation}
which combined with \eqref{h-var-y} gives 
\begin{equation}\label{opt-cond-g2-rem} y^{k+1}+M_2^k(z^k-z^{k+1})\in\partial g(z^{k+1}).
\end{equation}
Using that $M_2^k=0$ and again  \eqref{h-var-y}, it further follows 
\begin{eqnarray*}0 &\in& \partial g^*(y^{k+1})-z^{k+1}\\
&=& \partial g^*(y^{k+1})+c^{-1}(y^{k+1}-y^k-cAx^{k+1}),
\end{eqnarray*}
which is equivalent to
\begin{eqnarray}y^{k+1} & = &(\id+c\partial g^*)^{-1}\left(y^k+cAx^{k+1}\right)\nonumber\\
& = & \argmin_{z\in {\cal G}}\left\{g^*(z)+\frac{1}{2c}\left\|z-\left(y^k+cAx^{k+1}\right)\right\|^2\right\}. \label{cond1}
\end{eqnarray}

The iterative scheme obtained in \eqref{cond1} and \eqref{cond2} generates, for a given starting point $(x^1,y^0)\in{\cal H} \times{\cal G}$ and $c>0$, 
the sequence $(x^k,y^k)_{k \geq 1}$ for all $k \geq 0$ as follows
\begin{eqnarray*}
y^{k+1} & = & \argmin_{z\in {\cal G}}\left\{g^*(z)+\frac{1}{2c}\left\|z-\left(y^k+cAx^{k+1}\right)\right\|^2\right\}\\
x^{k+2} & = & \argmin_{x\in {\cal H}}\left\{f(x)+\frac{1}{2\tau_{k+1}}\left\|x-\left(x^{k+1}-\tau_{k+1}\nabla h(x^{k+1})
-\tau_{k+1}A^*(2y^{k+1}-y^k)\right)\right\|^2\right\}.
\end{eqnarray*}
For  $\tau_k=\tau >0$ for all $k\geq 1$ one recovers a primal-dual algorithm from \cite{condat2013} that has been investigated 
under the assumption $\frac{1}{\tau}-c\|A\|^2>\frac{L}{2}$ (see Algorithm 3.2 and Theorem 3.1 in \cite{condat2013}). We invite the reader to consult 
\cite{ch-pck, vu, b-c-h, b-c-h2} for more insights into primal-dual algorithms and their highlights. Primal-dual algorithms with dynamic step sizes have been investigated in \cite{ch-pck} and \cite{b-c-h2}, 
where it has been shown that clever strategies in the choice of the step sizes can improve the convergence behavior.
\end{remark}

\subsection{Ergodic convergence rates for the primal-dual gap}\label{subsec31}

In this section we will provide a convergence rate result for a primal-dual gap function formulated in terms of the associated Lagrangian $l$.
We start by proving a technical statement (see also \cite{shefi-teboulle2014}). 

\begin{lemma}\label{l42-sh-teb-h} In the context of Problem \ref{admm-p1}, let $(x^k,z^k,y^k)_{k\geq 0}$ 
be a sequence generated by Algorithm \ref{alg-h-var}. Then for all $k\geq 0$ and all $(x,z,y)\in{\cal H}\times{\cal G}\times{\cal G}$ the following inequality holds
\begin{align*}l(x^{k+1},z^{k+1},y) \leq & \ l(x,z,y^{k+1})+c\langle z^{k+1}-z^k,A(x-x^{k+1})\rangle\\
&+\frac{1}{2}\left(\|x-x^k\|_{M_1^k}^2 + \|z-z^k\|_{M_2^k}^2 + c^{-1}\|y-y^k\|^2\right)\\
&-\frac{1}{2}\left(\|x-x^{k+1}\|_{M_1^k}^2+\|z-z^{k+1}\|_{M_2^k}^2+c^{-1}\|y-y^{k+1}\|^2\right)\\
&-\frac{1}{2}\left(\|x^{k+1}-x^k\|_{M_1^k}^2-L\|x^{k+1}-x^k\|^2+\|z^{k+1}-z^k\|_{M_2^k}^2+c^{-1}\|y^{k+1}-y^k\|^2\right).
\end{align*}
Moreover, we have for all $k\geq 0$
$$c\langle z^{k+1}-z^k,A(x-x^{k+1})\rangle\leq \frac{c}{2}\left(\|Ax-z^k\|^2-\|Ax-z^{k+1}\|^2\right)+\frac{1}{2c}\|y^{k+1}-y^k\|^2.$$
\end{lemma}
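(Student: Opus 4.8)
The plan is to prove the two inequalities separately, each via the variational (subdifferential) characterization of the minimizers in Algorithm~\ref{alg-h-var}.

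For the first inequality, I would start from the optimality conditions for \eqref{h-var-x} and \eqref{h-var-z}. From \eqref{h-var-x}, there exists $\xi^{k+1}\in\partial f(x^{k+1})$ with
$$0=\xi^{k+1}+\nabla h(x^k)+cA^*(Ax^{k+1}-z^k+c^{-1}y^k)+M_1^k(x^{k+1}-x^k),$$
and similarly from \eqref{h-var-z} there exists $\zeta^{k+1}\in\partial g(z^{k+1})$ with
$$0=\zeta^{k+1}-c(Ax^{k+1}-z^{k+1}+c^{-1}y^k)+M_2^k(z^{k+1}-z^k).$$
Using the update rule \eqref{h-var-y}, the term $cA^*(Ax^{k+1}-z^k+c^{-1}y^k)$ can be rewritten: note $Ax^{k+1}-z^k+c^{-1}y^k=(Ax^{k+1}-z^{k+1})+(z^{k+1}-z^k)+c^{-1}y^k=c^{-1}y^{k+1}+(z^{k+1}-z^k)$, so that $\xi^{k+1}=-\nabla h(x^k)-A^*y^{k+1}-cA^*(z^{k+1}-z^k)-M_1^k(x^{k+1}-x^k)$; likewise $\zeta^{k+1}=y^{k+1}-M_2^k(z^{k+1}-z^k)$. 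Then, for arbitrary $(x,z)$, the subgradient inequalities $f(x)\geq f(x^{k+1})+\langle\xi^{k+1},x-x^{k+1}\rangle$ and $g(z)\geq g(z^{k+1})+\langle\zeta^{k+1},z-z^{k+1}\rangle$ are added. One also uses convexity of $h$ in the form $h(x)\geq h(x^k)+\langle\nabla h(x^k),x-x^k\rangle$ together with the descent lemma $h(x^{k+1})\leq h(x^k)+\langle\nabla h(x^k),x^{k+1}-x^k\rangle+\frac{L}{2}\|x^{k+1}-x^k\|^2$, which is exactly where the $L\|x^{k+1}-x^k\|^2$ term and the gradient-evaluation-at-$x^k$ subtlety enter. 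After substituting the expressions for $\xi^{k+1},\zeta^{k+1}$ and collecting the inner products against $y^{k+1}$ to reconstruct $\langle y^{k+1},Ax-z\rangle$ versus $\langle y^{k+1},Ax^{k+1}-z^{k+1}\rangle$ (the latter being $c^{-1}\langle y^{k+1},y^{k+1}-y^k\rangle$ by \eqref{h-var-y}), one arrives at a bound on $l(x^{k+1},z^{k+1},y^{k+1})-l(x,z,y^{k+1})$; finally the $y$-dependence is handled by writing $l(x^{k+1},z^{k+1},y)-l(x^{k+1},z^{k+1},y^{k+1})=\langle y-y^{k+1},Ax^{k+1}-z^{k+1}\rangle=c^{-1}\langle y-y^{k+1},y^{k+1}-y^k\rangle$. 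The quadratic terms $M_1^k(x^{k+1}-x^k)$, $M_2^k(z^{k+1}-z^k)$, $cA^*(z^{k+1}-z^k)$, and the $y$-difference inner products are all converted to the stated telescoping seminorm differences via the elementary three-point identity $2\langle U(a-b),c-a\rangle=\|c-b\|_U^2-\|c-a\|_U^2-\|a-b\|_U^2$ (and its $c^{-1}\|\cdot\|^2$ analogue for the $y$-terms, using \eqref{h-var-y} to identify $y^{k+1}-y^k=c(Ax^{k+1}-z^{k+1})$).

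The second inequality is lighter: from \eqref{h-var-y} we have $c(z^{k+1}-z^k)=c(z^{k+1}-Ax^{k+1})+c(Ax^{k+1}-z^k)=-(y^{k+1}-y^k)+c(Ax^{k+1}-z^k)$, but the cleanest route is to write $c\langle z^{k+1}-z^k,A(x-x^{k+1})\rangle=c\langle z^{k+1}-z^k,(Ax-z^k)-(Ax^{k+1}-z^k)\rangle$ and then use $Ax^{k+1}-z^k=Ax^{k+1}-z^{k+1}+(z^{k+1}-z^k)=c^{-1}(y^{k+1}-y^k)+(z^{k+1}-z^k)$. Expanding gives $c\langle z^{k+1}-z^k,Ax-z^k\rangle-\langle z^{k+1}-z^k,y^{k+1}-y^k\rangle-c\|z^{k+1}-z^k\|^2$; the first term is handled by the polarization identity $2\langle z^{k+1}-z^k,Ax-z^k\rangle=\|Ax-z^k\|^2-\|Ax-z^{k+1}\|^2+\|z^{k+1}-z^k\|^2$, and the middle term is bounded by Cauchy--Schwarz and Young's inequality $|\langle z^{k+1}-z^k,y^{k+1}-y^k\rangle|\leq\frac{c}{2}\|z^{k+1}-z^k\|^2+\frac{1}{2c}\|y^{k+1}-y^k\|^2$; the $-c\|z^{k+1}-z^k\|^2$ together with the $+\frac{c}{2}\|z^{k+1}-z^k\|^2$ from polarization and the $+\frac{c}{2}\|z^{k+1}-z^k\|^2$ from Young cancel, leaving exactly the claimed bound.

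The main obstacle is bookkeeping in the first inequality: correctly eliminating $\nabla h(x^k)$ using the combination of the convexity inequality for $h$ at $x$ and the descent lemma at $x^{k+1}$ (so that $h$ never appears evaluated away from $x^{k+1}$ in $l$ while the penalty $-\frac{L}{2}\|x^{k+1}-x^k\|^2$ appears with the right sign), and then regrouping every cross term into the telescoping seminorm structure without sign errors. There is no deep difficulty — it is the standard ADMM energy estimate adapted to variable metrics and an inexact (linearized) treatment of $h$ — but the algebra must be executed carefully.
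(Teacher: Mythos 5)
Your plan is correct and follows essentially the same route as the paper: optimality conditions for the two subproblems rewritten via the $y$-update, subgradient inequalities for $f$ and $g$, convexity of $h$ combined with the descent lemma to absorb $\nabla h(x^k)$ at the cost of $\frac{L}{2}\|x^{k+1}-x^k\|^2$, three-point identities for the quadratic terms, and the identity $\langle y-y^{k+1},Ax^{k+1}-z^{k+1}\rangle=c^{-1}\langle y-y^{k+1},y^{k+1}-y^k\rangle$ for the dual variable; the second inequality is likewise the same polarization-plus-Young argument the paper delegates to the arithmetic--geometric mean inequality. Only note that the term $cA^*(z^{k+1}-z^k)$ should not be converted into seminorm differences in the first inequality (it must survive as the explicit cross term $c\langle z^{k+1}-z^k,A(x-x^{k+1})\rangle$, which is then bounded separately), a point your own second paragraph implicitly respects.
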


\begin{proof} We fix $k\geq 0$ and $(x,z,y)\in{\cal H}\times{\cal G}\times{\cal G}$. 
Writing the optimality conditions for \eqref{h-var-x} we obtain 
\begin{equation}\label{opt-cond-f} -\nabla h(x^k)+cA^*(z^k-c^{-1}y^k-Ax^{k+1})+M_1^k(x^k-x^{k+1})\in\partial f(x^{k+1}).
\end{equation}
From the definition of the convex subdifferential we derive 
\begin{eqnarray}f(x^{k+1})-f(x)& \leq & \langle \nabla h(x^k)+cA^*(-z^k+c^{-1}y^k+Ax^{k+1})+M_1^k(-x^k+x^{k+1}),x-x^{k+1} \rangle\nonumber\\
&=& \langle \nabla h(x^k),x-x^{k+1}\rangle+\langle y^{k+1},A(x-x^{k+1})\rangle-c\langle z^k-z^{k+1},A(x-x^{k+1})\rangle\label{ineq-f1}\nonumber\\ 
&&+\langle M_1^k(x^{k+1}-x^k),x-x^{k+1}\rangle\label{ineq-f2},
\end{eqnarray}
where for the last equality we used \eqref{h-var-y}. 

Furthermore, we claim that 
\begin{equation}\label{ineq-h} h(x^{k+1})-h(x)\leq -\langle \nabla h(x^k),x-x^{k+1}\rangle+\frac{L}{2}\|x^{k+1}-x^k\|^2.
\end{equation}
Indeed, this follows by applying the convexity of $h$ and the descent lemma (see \cite[Theorem 18.15 (iii)]{bauschke-book}): 
\begin{align*}
h(x)-h(x^{k+1})-\langle\nabla h(x^k),x-x^{k+1}\rangle & \geq \\
h(x^k)+\langle \nabla h(x^k),x-x^k\rangle-h(x^{k+1})-\langle \nabla h(x^k),x-x^{k+1}\rangle & = \\
h(x^k)-h(x^{k+1})+\langle\nabla h(x^k),x^{k+1}-x^k\rangle & \geq -\frac{L}{2}\|x^{k+1}-x^k\|^2.
\end{align*}
By combining 
\eqref{ineq-f2} and \eqref{ineq-h} we obtain 
\begin{align}(f+h)(x^{k+1}) \leq & \ (f+h)(x)+\langle y^{k+1},A(x-x^{k+1})\rangle-c\langle z^k-z^{k+1},A(x-x^{k+1})\rangle \nonumber\\
& +\frac{1}{2}\|x-x^k\|_{M_1^k}^2-\frac{1}{2}\|x-x^{k+1}\|_{M_1^k}^2-\frac{1}{2}\|x^{k+1}-x^k\|_{M_1^k}^2+\frac{L}{2}\|x^{k+1}-x^k\|^2\label{ineq-f+h2}.
\end{align}
From the optimality condition for \eqref{h-var-z} we obtain 
\begin{equation}\label{opt-cond-g} c(Ax^{k+1}-z^{k+1}+c^{-1}y^k)+M_2^k(z^k-z^{k+1})\in\partial g(z^{k+1}),
\end{equation}
which, combined with \eqref{h-var-y}, gives 
\begin{equation}\label{opt-cond-g2} y^{k+1}+M_2^k(z^k-z^{k+1})\in\partial g(z^{k+1}).
\end{equation}
From here we derive the inequality 
\begin{align} g(z^{k+1})-g(z) \leq & \ \langle -y^{k+1}+M_2^k(z^{k+1}-z^k),z-z^{k+1}\rangle\nonumber\\
= & -\langle y^{k+1},z-z^{k+1}\rangle+\frac{1}{2}\|z-z^k\|_{M_2^k}^2-\frac{1}{2}\|z-z^{k+1}\|_{M_2^k}^2-\frac{1}{2}\|z^{k+1}-z^k\|_{M_2^k}^2.\label{ineq-g}
\end{align}
The first statement of the lemma follows by combining the inequalities \eqref{ineq-f+h2} and \eqref{ineq-g} with the identity (see \eqref{h-var-y})
$$\langle y,Ax^{k+1}-z^{k+1}\rangle= \langle y^{k+1},Ax^{k+1}-z^{k+1}\rangle+\frac{1}{2c}\left(\|y-y^k\|^2-\|y-y^{k+1}\|^2-\|y^{k+1}-y^k\|^2\right).$$
The second statement follows easily from the arithmetic-geometric mean inequality in Hilbert spaces (see \cite[Proposition 5.3(a)]{shefi-teboulle2014}). 
\end{proof}

A direct consequence of the two inequalities in Lemma \ref{l42-sh-teb-h}  is the following result. 

\begin{lemma}\label{p53-sh-teb-h} In the context of Problem \ref{admm-p1}, assume that $M_1^k-L\id\in{\cal S_+}({\cal H}),
M_1^k\succcurlyeq M_1^{k+1}$, $M_2^k\in{\cal S_+}({\cal G}), M_2^k\succcurlyeq M_2^{k+1}$ for all $k \geq 0$, and let $(x^k,z^k,y^k)_{k\geq 0}$ 
be the sequence generated by Algorithm \ref{alg-h-var}. Then for all $k\geq 0$ and all $(x,z,y)\in{\cal H}\times{\cal G}\times{\cal G}$ the following inequality holds
\begin{eqnarray*}l(x^{k+1},z^{k+1},y)& \leq & l(x,z,y^{k+1})+\frac{c}{2}\left(\|Ax-z^k\|^2-\|Ax-z^{k+1}\|^2\right)\\
&&+\frac{1}{2}\left(\|x-x^k\|_{M_1^k}^2-\|x-x^{k+1}\|_{M_1^{k+1}}^2+\|z-z^k\|_{M_2^k}^2-\|z-z^{k+1}\|_{M_2^{k+1}}^2\right)\\
&&+\frac{1}{2c}\left(\|y-y^k\|^2-\|y-y^{k+1}\|^2\right). 
\end{eqnarray*}
\end{lemma}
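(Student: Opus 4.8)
The plan is to start from the first inequality of Lemma~\ref{l42-sh-teb-h}, which already packages the subdifferential characterizations of the $x$- and $z$-updates plus the Descent Lemma for $h$, and then to eliminate the awkward cross term $c\langle z^{k+1}-z^k, A(x-x^{k+1})\rangle$ using the second inequality of the same lemma. Adding the two estimates, the term $c\langle z^{k+1}-z^k, A(x-x^{k+1})\rangle$ cancels and is replaced by $\tfrac{c}{2}(\|Ax-z^k\|^2-\|Ax-z^{k+1}\|^2)+\tfrac{1}{2c}\|y^{k+1}-y^k\|^2$; the latter $+\tfrac{1}{2c}\|y^{k+1}-y^k\|^2$ is then absorbed against the $-\tfrac{1}{2c}\|y^{k+1}-y^k\|^2$ sitting in the last line of the first inequality.

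Next I would bring in the monotonicity hypotheses on the metrics. After the cancellation above, the right-hand side contains $+\tfrac12\|x-x^k\|_{M_1^k}^2 - \tfrac12\|x-x^{k+1}\|_{M_1^k}^2$ and $+\tfrac12\|z-z^k\|_{M_2^k}^2 - \tfrac12\|z-z^{k+1}\|_{M_2^k}^2$; using $M_1^k\succcurlyeq M_1^{k+1}$ and $M_2^k\succcurlyeq M_2^{k+1}$ (i.e.\ $\|x-x^{k+1}\|_{M_1^k}^2 \ge \|x-x^{k+1}\|_{M_1^{k+1}}^2$ and similarly for $M_2$), these negative terms are bounded above by $-\tfrac12\|x-x^{k+1}\|_{M_1^{k+1}}^2$ and $-\tfrac12\|z-z^{k+1}\|_{M_2^{k+1}}^2$, which is exactly the telescoping-ready form in the claimed inequality. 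Finally, the leftover negative quantity $-\tfrac12(\|x^{k+1}-x^k\|_{M_1^k}^2 - L\|x^{k+1}-x^k\|^2) - \tfrac12\|z^{k+1}-z^k\|_{M_2^k}^2$ is discarded (dropped) after observing it is $\le 0$: the hypothesis $M_1^k - L\,\id \in {\cal S}_+({\cal H})$ gives $\|x^{k+1}-x^k\|_{M_1^k}^2 \ge L\|x^{k+1}-x^k\|^2$, and $M_2^k\in{\cal S}_+({\cal G})$ makes the last term nonnegative. Collecting what remains yields precisely the stated bound.

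The main obstacle is really just bookkeeping rather than a conceptual difficulty: one must track four different squared-seminorm ``potential'' terms (in $x$ with $M_1^k$, in $z$ with $M_2^k$, in $z$ with $cI$ coming from $\|Ax-z\|^2$, and in $y$ with $c^{-1}I$) and make sure that exactly the right pieces cancel, that the $\|y^{k+1}-y^k\|^2$ terms with opposite signs meet and annihilate, and that every term one discards has the correct sign. The only place where a hypothesis is genuinely used in a non-trivial way is the step $\|x^{k+1}-x^k\|_{M_1^k}^2 \ge L\|x^{k+1}-x^k\|^2$, which is what allows the $\tfrac{L}{2}\|x^{k+1}-x^k\|^2$ surplus from the Descent Lemma to be swallowed; the monotonicity of the metrics is needed only to line up the telescoping indices. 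Since both inequalities being combined are already proved in Lemma~\ref{l42-sh-teb-h}, the argument is short and the write-up amounts to adding the two displays and simplifying.
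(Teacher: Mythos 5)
Your proposal is correct and coincides with the paper's own argument: the paper states Lemma~\ref{p53-sh-teb-h} as "a direct consequence of the two inequalities" of Lemma~\ref{l42-sh-teb-h}, obtained exactly by substituting the second estimate into the first, cancelling the $\frac{1}{2c}\|y^{k+1}-y^k\|^2$ terms, weakening $-\|\cdot\|_{M_i^k}^2$ to $-\|\cdot\|_{M_i^{k+1}}^2$ via the monotonicity of the metrics, and discarding the remaining nonpositive terms using $M_1^k-L\id\in{\cal S_+}({\cal H})$ and $M_2^k\in{\cal S_+}({\cal G})$. Your bookkeeping and the identification of where each hypothesis is used are accurate.
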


We can now state the main result of this subsection.

\begin{theorem}\label{ergodic} In the context of Problem \ref{admm-p1}, assume that $M_1^k-L\id\in{\cal S_+}({\cal H}),
M_1^k\succcurlyeq M_1^{k+1}$, $M_2^k\in{\cal S_+}({\cal G}), M_2^k\succcurlyeq M_2^{k+1}$ for all $k \geq 0$, and let $(x^k,z^k,y^k)_{k\geq 0}$ 
be the sequence generated by Algorithm \ref{alg-h-var}. For all $k \geq 1$ define the ergodic sequences
$$\ol x^k:=\frac{1}{k}\sum_{i=1}^{k}x^{i}, \ \ol y^k:=\frac{1}{k}\sum_{i=1}^{k}y^{i}, \ \ol z^k:=\frac{1}{k}\sum_{i=1}^{k}z^{i}.$$
Then for all $k\geq 1$ and all $(x,z,y)\in{\cal H}\times{\cal G}\times{\cal G}$ it holds
$$l(\ol x^k,\ol z^k,y)-l(x,z,\ol y^k)\leq \frac{\gamma(x,z,y)}{k},$$
where $\gamma(x,z,y):= \frac{c}{2}\|Ax-z^0\|^2
+\frac{1}{2}\left(\|x-x^0\|_{M_1^0}^2+\|z-z^0\|_{M_2^0}^2\right)
+\frac{1}{2c}\|y-y^0\|^2. $
\end{theorem}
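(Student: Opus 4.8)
The plan is to sum the master inequality of Lemma~\ref{p53-sh-teb-h} over a block of iterations, exploit the telescoping of its right-hand side, and then pass to the ergodic iterates using the convex--concave structure of the Lagrangian $l$. Fix $k\geq 1$ (the horizon) and an arbitrary $(x,z,y)\in{\cal H}\times{\cal G}\times{\cal G}$. Applying Lemma~\ref{p53-sh-teb-h} with running index $i-1$ for $i=1,\dots,k$ and adding the $k$ resulting inequalities, all the quadratic terms on the right telescope: the $\|Ax-\cdot\|^2$ terms collapse to $\tfrac c2(\|Ax-z^0\|^2-\|Ax-z^k\|^2)$, the $M_1$-terms to $\tfrac12(\|x-x^0\|_{M_1^0}^2-\|x-x^k\|_{M_1^k}^2)$, the $M_2$-terms to $\tfrac12(\|z-z^0\|_{M_2^0}^2-\|z-z^k\|_{M_2^k}^2)$, and the $y$-terms to $\tfrac1{2c}(\|y-y^0\|^2-\|y-y^k\|^2)$. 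Since $M_1^k-L\id\in{\cal S_+}({\cal H})$ implies $M_1^k\in{\cal S_+}({\cal H})$, and $M_2^k\in{\cal S_+}({\cal G})$, the terms evaluated at index $k$ are $\geq 0$, and likewise $\|Ax-z^k\|^2\geq 0$; discarding them yields
$$\sum_{i=1}^{k}l(x^i,z^i,y)\ \leq\ \sum_{i=1}^{k}l(x,z,y^i)+\gamma(x,z,y),$$
with $\gamma(x,z,y)$ exactly as in the statement.

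Next I would divide by $k$ and use that $l$ is convex in its first two arguments and affine (hence concave) in the third. For fixed $y$, $l(\cdot,\cdot,y)=f(\cdot)+h(\cdot)+g(\cdot)+\langle y,A\cdot-\cdot\rangle$ is convex on ${\cal H}\times{\cal G}$, so Jensen's inequality gives $l(\ol x^k,\ol z^k,y)\leq\frac1k\sum_{i=1}^{k}l(x^i,z^i,y)$; note each $l(x^i,z^i,y)$ is finite since, by construction of the algorithm, $x^i\in\dom f$ and $z^i\in\dom g$. For fixed $(x,z)$, $l(x,z,\cdot)$ is affine, whence $\frac1k\sum_{i=1}^{k}l(x,z,y^i)=l(x,z,\ol y^k)$ (and if $(f+h)(x)+g(z)=+\infty$ the asserted inequality holds trivially, as $\ol x^k\in\dom f$, $\ol z^k\in\dom g$ by convexity of the domains). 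Chaining these with the summed inequality and dividing by $k$ gives $l(\ol x^k,\ol z^k,y)-l(x,z,\ol y^k)\leq\gamma(x,z,y)/k$, which is the claim.

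I do not expect a genuine obstacle; the argument is essentially a telescoping sum plus Jensen's inequality. The two points that need care are purely bookkeeping: first, the index shift --- Lemma~\ref{p53-sh-teb-h} stated for index $j$ carries information about $x^{j+1},z^{j+1},y^{j+1}$, so summing over $j=0,\dots,k-1$ produces a sum over the iterates indexed $1,\dots,k$, matching the definition of $\ol x^k,\ol y^k,\ol z^k$; second, the discarded end-of-telescope terms must be nonnegative, which is precisely where the standing hypotheses $M_1^k-L\id\in{\cal S_+}({\cal H})$ and $M_2^k\in{\cal S_+}({\cal G})$ are invoked. The monotonicity assumptions $M_1^k\succcurlyeq M_1^{k+1}$ and $M_2^k\succcurlyeq M_2^{k+1}$ are needed only to guarantee that Lemma~\ref{p53-sh-teb-h} holds in the telescoping-friendly form used here, and play no further role in this proof.
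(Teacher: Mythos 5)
Your proof is correct and follows essentially the same route as the paper: sum the inequality of Lemma~\ref{p53-sh-teb-h} over the first $k$ iterations, telescope and drop the nonnegative terminal terms to get $\gamma(x,z,y)$, then divide by $k$ and use convexity of $l$ in $(x,z)$ together with linearity in $y$. Your additional bookkeeping on the index shift and on the nonnegativity of the discarded terms only makes explicit what the paper leaves implicit.
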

 
\begin{proof} We fix $k\geq 1$ and $(x,z,y)\in{\cal H}\times{\cal G}\times{\cal G}$. Summing up the inequalities in Lemma \ref{p53-sh-teb-h} for $i=0,...,k-1$ and using classical arguments for 
telescoping sums, we obtain 
$$\sum_{i=0}^{k-1}l(x^{k+1},z^{k+1},y)\leq \sum_{i=0}^{k-1}l(x,z,y^{k+1})+\gamma(x,z,y).$$
Since $l$ is convex in $(x,z)$ and linear in $y$, the conclusion follows from the definition 
of the ergodic sequences. 
\end{proof}

\begin{remark} Let $(x^*,z^*,y^*)$ be a saddle point for the Lagrangian $l$. By taking $(x,z,y):=(x^*,z^*,y^*)$ in the above theorem it yields 
$$(f+h)(\ol x^k)+g(\ol z^k)+\langle y^*,A\ol x^k-\ol z^k\rangle-\big(f(x^*) + h(x^*) + g(Ax^*) \big) \leq \frac{\gamma(x^*,z^*,y^*)}{k} \ \forall k\geq 1,$$
where $f(x^*) + h(x^*) + g(Ax^*)$ is the optimal objective value of the problem \eqref{prim-h}. Hence, if we suppose that the set of optimal solutions of the dual problem \eqref{dual-h} is contained in a bounded set,  there exists 
$R>0$ such that for all $k \geq 1$
\begin{align*}
(f+h)(\ol x^k)+g(\ol z^k)+R\|A\ol x^k-\ol z^k\|-\big(f(x^*) + h(x^*) + g(Ax^*) \big) & \leq\\ 
\frac{1}{k} \left(\frac{c}{2}\|Ax^*-z^0\|^2
+\frac{1}{2}\|x^*-x^0\|_{M_1^0}^2+\frac{1}{2}\|z^*-z^0\|_{M_2^0}^2
+\frac{1}{c}(R^2+\|y^0\|^2)\right). &
\end{align*}

The set of dual optimal solutions of \eqref{dual-h} is equal to the convex subdifferential of the infimal value function of the problem \eqref{prim-h}
$$\psi : {\cal G} \rightarrow \overline \R, \ \psi(y) = \inf_{x \in {\cal H}} \left(f(x) + h(x) + g(Ax+y) \right),$$
at $0$. This set is weakly compact, thus bounded, if $0 \in \inte(\dom \psi) = \inte(A(\dom f) - \dom g)$ (see \cite{bauschke-book, b-hab, Zal-carte}).

\end{remark}

\subsection{Convergence of the sequence of generated iterates}\label{sub32}

In this subsection we will address  the convergence of the sequence of iterates generated by Algorithm \ref{alg-h-var}. One of the important tools for the proof of the convergence result will be
the following version of the Opial Lemma formulated in the context of variable metrics (see \cite[Theorem 3.3]{combettes-vu2013}). 

\begin{lemma}\label{opial-var} Let $S$ be a nonempty subset of ${\cal H}$ and $(x^k)_{k \geq 0}$ a sequence in ${\cal H}$.  Let $\alpha>0$ and $W^k\in{\cal P}_{\alpha}({\cal H})$ be such that 
$W^k\succcurlyeq W^{k+1}$ for all $k\geq 0$. Assume that:  

(i) for all $z\in S$ and for all $k\geq 0$: $\|x^{k+1}-z\|_{W^{k+1}}\leq \|x^k-z\|_{W^k}$; 

(ii) every weak sequential cluster point of $(x^k)_{k\geq 0}$ belongs to $S$. 

\noindent Then $(x^k)_{k\geq 0}$ converges weakly to an element in $S$. 
\end{lemma}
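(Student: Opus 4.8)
The plan is to run the classical Opial argument, with the fixed inner product replaced throughout by the slowly varying ones $\langle\cdot,W^k\cdot\rangle$. First, fix $z\in S$. Hypothesis (i) says the sequence $\big(\|x^k-z\|_{W^k}\big)_{k\geq 0}$ is nonincreasing, hence convergent; denote its limit by $\ell(z)$. Since $W^k\succcurlyeq\alpha\,\id$, we have $\sqrt{\alpha}\,\|x^k-z\|\leq\|x^k-z\|_{W^k}\leq\|x^0-z\|_{W^0}$, so $(x^k)_{k\geq 0}$ is bounded and therefore, $\cal H$ being reflexive, it possesses weak sequential cluster points; by (ii) all of them lie in $S$. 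I would also record at this stage that, being a nonincreasing sequence in ${\cal S_+}({\cal H})$ bounded below by $\alpha\,\id$, the family $(W^k)_{k\geq 0}$ converges in the strong operator topology to some $W\in{\cal S_+}({\cal H})$ with $W\succcurlyeq\alpha\,\id$ (for each $x$, $k\mapsto\|x\|_{W^k}^2$ is nonincreasing and bounded below, hence convergent, and polarization upgrades this to strong convergence of the operators).

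The heart of the proof is to show that $(x^k)_{k\geq 0}$ has at most one weak sequential cluster point. Suppose $a$ and $b$ are two of them, say $x^{k_j}\rightharpoonup a$ and $x^{m_j}\rightharpoonup b$; by (ii), $a,b\in S$. Expanding the seminorms gives, for every $k\geq 0$,
\[
\|x^k-a\|_{W^k}^2-\|x^k-b\|_{W^k}^2=2\langle x^k,W^k(b-a)\rangle+\langle a,W^ka\rangle-\langle b,W^kb\rangle .
\]
The left-hand side converges to $\ell(a)-\ell(b)$ by the first paragraph. On the right, $W^k(b-a)\to W(b-a)$ in norm while $(x^k)$ is bounded, so $\langle x^k,W^k(b-a)\rangle-\langle x^k,W(b-a)\rangle\to 0$, and $\langle a,W^ka\rangle\to\langle a,Wa\rangle$, $\langle b,W^kb\rangle\to\langle b,Wb\rangle$. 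Hence $\langle x^k,W(b-a)\rangle$ converges; evaluating its limit along $(k_j)$ and along $(m_j)$ yields $\langle a,W(b-a)\rangle=\langle b,W(b-a)\rangle$, i.e. $\|a-b\|_W^2=0$. Since $W\succcurlyeq\alpha\,\id$, this forces $a=b$.

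Finally, a bounded sequence in a Hilbert space with exactly one weak sequential cluster point converges weakly to it (otherwise a subsequence staying outside some fixed weak neighbourhood of that cluster point would, by boundedness, have its own weak cluster point — a second one, a contradiction). Therefore $(x^k)_{k\geq 0}$ converges weakly, and by (ii) its limit lies in $S$, which is the assertion. The only genuinely delicate points are the strong operator convergence $W^k\to W$ and, in the uniqueness step, the replacement of $\langle x^k,W^k(b-a)\rangle$ by $\langle x^k,W(b-a)\rangle$; both are handled by combining the monotonicity of $(W^k)_{k\geq 0}$ with the boundedness of $(x^k)_{k\geq 0}$, and everything else is the standard Opial bookkeeping.
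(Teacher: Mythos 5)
The paper offers no proof of this lemma at all: it is quoted from Combettes and V\~{u} \cite[Theorem 3.3]{combettes-vu2013}, so there is nothing in-paper to compare against, and your argument is essentially the standard variable-metric Opial proof (and essentially the one in that reference). The structure is correct: Fej\'er monotonicity in the varying seminorms together with $W^k\succcurlyeq\alpha\id$ gives boundedness and hence weak cluster points, the limit operator $W$ supplies a single inner product in which the two-cluster-point computation can be run, and $\langle a-b,W(b-a)\rangle=0$ with $W\succcurlyeq\alpha\id$ forces uniqueness. The one step whose justification is too quick is the assertion that polarization upgrades the convergence of the quadratic forms $\langle x,W^kx\rangle$ to convergence of $(W^k)_{k\ge 0}$ in the \emph{strong} operator topology. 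Polarization only yields convergence of $\langle W^kx,y\rangle$ for all $x,y$, i.e.\ weak operator convergence, and that is not enough for your uniqueness step: since $x^k$ varies, controlling $\langle x^k,(W^k-W)(b-a)\rangle$ genuinely requires $\|(W^k-W)(b-a)\|\to 0$. The missing ingredient is the classical fact that a nonincreasing sequence of positive self-adjoint operators that is bounded below converges in the strong operator topology: with $T^k:=W^k-W\succcurlyeq 0$, the generalized Cauchy--Schwarz inequality gives $\|T^kx\|^2\le\|T^k\|\,\langle T^kx,x\rangle\to 0$, and $\sup_k\|T^k\|<+\infty$. This is \cite[Lemma 2.3]{combettes-vu2013}, or Riesz--Nagy, Th\'eor\`eme 104.1 \cite{rn}, which the authors themselves invoke for exactly this purpose in the proof of Theorem \ref{cong-it-sh-teb}. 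With that one line added (and noting the harmless slip that the left-hand side tends to $\ell(a)^2-\ell(b)^2$ rather than $\ell(a)-\ell(b)$), your proof is complete.
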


The proof of the  convergence result relies on techniques specific to monotone operator theory and does not make use of the values of the objective function or of the Lagrangian $l$. This makes it different from the proofs in \cite{shefi-teboulle2014} and from the other conventional convergence proofs for ADMM methods.

\begin{theorem}\label{cong-it} In the context of Problem \ref{admm-p1}, assume that  the set of saddle points of the Lagrangian $l$ is nonempty and that
$M_1^k-\frac{L}{2}\id\in{\cal S_+}({\cal H}),
M_1^k\succcurlyeq M_1^{k+1}$, $M_2^k\in{\cal S_+}({\cal G}), M_2^k\succcurlyeq M_2^{k+1}$ for all $k\geq 0$, and let $(x^k,z^k,y^k)_{k\geq 0}$ be the sequence generated by Algorithm \ref{alg-h-var}.
If one of the following assumptions
\begin{itemize}
\item[(I)] there exists $\alpha_1>0$ such that $M_1^k-\frac{L}{2}\id\in{\cal P}_{\alpha_1}({\cal H})$ for all $k\geq 0$;

\item[(II)] there exists $\alpha, \alpha_2>0$ such that $M_1^k-\frac{L}{2}\id+A^*A\in {\cal P}_{\alpha}({\cal H})$ and $M_2^k\in {\cal P}_{\alpha_2}({\cal G})$ 
for all $k\geq 0$;
\item[(III)] there exists $\alpha>0$ such that $M_1^k-\frac{L}{2}\id+A^*A\in {\cal P}_{\alpha}({\cal H})$ and 
$2M_2^{k+1}\succcurlyeq M_2^{k}\succcurlyeq M_2^{k+1}$ for all $k\geq 0$;
\end{itemize}
is fulfilled, then $(x^k,z^k,y^k)_{k\geq 0}$ converges weakly to a saddle point of the Lagrangian $l$.  
\end{theorem}

\begin{proof} Let $S\subseteq {\cal H}\times {\cal G}\times {\cal G}$ denote the set of the saddle points of the Lagrangian $l$ and $(x^*,z^*,y^*)$ be a fixed element in $S$. Then $z^*=Ax^*$ and the optimality conditions hold
$$-A^*y^*-\nabla h(x^*)\in\partial f(x^*), \  y^*\in\partial g(Ax^*). $$
Let $k\geq 0$ be fixed. 
Taking into account \eqref{opt-cond-f}, \eqref{opt-cond-g} and the monotonicity of $\partial f$ and $\partial g$, we obtain 
$$\langle cA^*(z^k-Ax^{k+1}-c^{-1}y^k)+M_1^k(x^k-x^{k+1})-\nabla h(x^k)+A^*y^*+\nabla h(x^*),x^{k+1}-x^*\rangle\geq 0$$
and 
$$\langle c(Ax^{k+1}-z^{k+1}+c^{-1}y^k)+M_2^k(z^k-z^{k+1})-y^*,z^{k+1}-Ax^*\rangle\geq 0.$$

We consider first the case $L>0$. By the Baillon-Haddad Theorem (see \cite[Corollary 18.16]{bauschke-book}), the gradient of $h$ is $L^{-1}$-cocoercive, hence 
the following inequality holds 
$$\langle\nabla h(x^*)-\nabla h(x^k),x^*-x^k\rangle\geq L^{-1}\|\nabla h(x^*)-\nabla h(x^k)\|^2.$$

Summing up the three inequalities obtained above we get
\begin{align*}
c\langle z^k-Ax^{k+1},Ax^{k+1}-Ax^*\rangle+\langle y^*-y^k,Ax^{k+1}-Ax^*\rangle+\langle\nabla h(x^*)-\nabla h(x^k),x^{k+1}-x^*\rangle &\\
+\langle M_1^k(x^k-x^{k+1}),x^{k+1}-x^*\rangle+c\langle Ax^{k+1}-z^{k+1},z^{k+1}-Ax^*\rangle+\langle y^k-y^*,z^{k+1}-Ax^*\rangle&\\
+\langle M_2^k(z^k-z^{k+1}),z^{k+1}-Ax^*\rangle+\langle\nabla h(x^*)-\nabla h(x^k),x^*-x^k\rangle-L^{-1}\|\nabla h(x^*)-\nabla h(x^k)\|^2 & \geq 0.
\end{align*}
Further, by taking into account \eqref{h-var-y}, it holds
$$\langle y^*-y^k,Ax^{k+1}-Ax^*\rangle+\langle y^k-y^*,z^{k+1}-Ax^*\rangle=\langle y^*-y^k,Ax^{k+1}-z^{k+1}\rangle=
c^{-1}\langle y^*-y^k,y^{k+1}-y^k\rangle.$$
By using some expressions of the inner products in terms of norms, we obtain
\begin{align*}
\frac{c}{2}\left(\|z^k-Ax^*\|^2-\|z^k-Ax^{k+1}\|^2-\|Ax^{k+1}-Ax^*\|^2\right) & \\
+ \frac{c}{2}\left(\|Ax^{k+1}-Ax^*\|^2-\|Ax^{k+1}-z^{k+1}\|^2-\|z^{k+1}-Ax^*\|^2\right) & \\
+\frac{1}{2c}\left(\|y^*-y^k\|^2+\|y^{k+1}-y^k\|^2-\|y^{k+1}-y^*\|^2\right)&\\
+\frac{1}{2}\left(\|x^k-x^*\|_{M_1^k}^2-\|x^k-x^{k+1}\|_{M_1^k}^2-\|x^{k+1}-x^*\|_{M_1^k}^2\right)&\\
+\frac{1}{2}\left(\|z^k-Ax^*\|_{M_2^k}^2-\|z^k-z^{k+1}\|_{M_2^k}^2-\|z^{k+1}-Ax^*\|_{M_2^k}^2\right)&\\
+\langle\nabla h(x^*)-\nabla h(x^k),x^{k+1}-x^k\rangle-L^{-1}\|\nabla h(x^*)-\nabla h(x^k)\|^2& \geq 0.
\end{align*}
By using again relation \eqref{h-var-y} for expressing $Ax^{k+1}-z^{k+1}$ and by taking into account that
\begin{align*}
\langle\nabla h(x^*)-\nabla h(x^k),x^{k+1}-x^k\rangle-L^{-1}\|\nabla h(x^*)-\nabla h(x^k)\|^2 & =\\
-L\left\|L^{-1}\left(\nabla h(x^*)-\nabla h(x^k)\right)+\frac{1}{2}\left(x^k-x^{k+1}\right)\right\|^2+\frac{L}{4}\|x^k-x^{k+1}\|^2&,
\end{align*}
it yields
\begin{align*}\frac{1}{2}\|x^{k+1}-x^*\|_{M_1^k}^2+\frac{1}{2}\|z^{k+1}-Ax^*\|_{M_2^k+c\id}^2+\frac{1}{2c}\|y^{k+1}-y^*\|^2 & \leq\\
\frac{1}{2}\|x^k-x^*\|_{M_1^k}^2+\frac{1}{2}\|z^k-Ax^*\|_{M_2^k+c\id}^2+\frac{1}{2c}\|y^k-y^*\|^2 & \\
-\frac{c}{2}\|z^k-Ax^{k+1}\|^2-\frac{1}{2}\|x^k-x^{k+1}\|_{M_1^k}^2-\frac{1}{2}\|z^k-z^{k+1}\|_{M_2^k}^2 &\\
-L\left\|L^{-1}\left(\nabla h(x^*)-\nabla h(x^k)\right)+\frac{1}{2}\left(x^k-x^{k+1}\right)\right\|^2+\frac{L}{4}\|x^k-x^{k+1}\|^2&
\end{align*}
and from here, by using the monotonicity assumptions on $(M_1^k)_{k \geq 0}$ and $(M_2^k)_{k \geq 0}$, we finally get
\begin{align}\frac{1}{2}\|x^{k+1}-x^*\|_{M_1^{k+1}}^2+\frac{1}{2}\|z^{k+1}-Ax^*\|_{M_2^{k+1}+c\id}^2+\frac{1}{2c}\|y^{k+1}-y^*\|^2 & \leq \nonumber\\
\frac{1}{2}\|x^k-x^*\|_{M_1^k}^2+\frac{1}{2}\|z^k-Ax^*\|_{M_2^k+c\id}^2+\frac{1}{2c}\|y^k-y^*\|^2 & \nonumber \\
-\frac{c}{2}\|z^k-Ax^{k+1}\|^2-\frac{1}{2}\|x^k-x^{k+1}\|_{M_1^k-\frac{L}{2}\id}^2-\frac{1}{2}\|z^k-z^{k+1}\|_{M_2^k}^2 & \nonumber\\
-L\left\|L^{-1}\left(\nabla h(x^*)-\nabla h(x^k)\right)+\frac{1}{2}\left(x^k-x^{k+1}\right)\right\|^2&.\label{fey-var}
\end{align}

In case $L=0$, similar arguments lead to the inequality 
\begin{align}\frac{1}{2}\|x^{k+1}-x^*\|_{M_1^{k+1}}^2+\frac{1}{2}\|z^{k+1}-Ax^*\|_{M_2^{k+1}+c\id}^2+\frac{1}{2c}\|y^{k+1}-y^*\|^2 & \leq \nonumber\\
 \frac{1}{2}\|x^k-x^*\|_{M_1^k}^2+\frac{1}{2}\|z^k-Ax^*\|_{M_2^k+c\id}^2+\frac{1}{2c}\|y^k-y^*\|^2 &\nonumber\\
 -\frac{c}{2}\|z^k-Ax^{k+1}\|^2-\frac{1}{2}\|x^k-x^{k+1}\|_{M_1^k}^2-\frac{1}{2}\|z^k-z^{k+1}\|_{M_2^k}^2.& \label{fey-var-h0}
\end{align}

It is easy to see, by using arguments invoking telescoping sums,  that, in both cases, \eqref{fey-var} and \eqref{fey-var-h0} yield 
\begin{equation}\label{series}\sum_{k\geq 0}\|z^k-Ax^{k+1}\|^2<+\infty, \ \sum_{k\geq 0}\|x^k-x^{k+1}\|_{M_1^k-\frac{L}{2}\id}^2<+\infty, \ \sum_{k\geq 0}\|z^k-z^{k+1}\|_{M_2^k}^2<+\infty.\end{equation}

{\it The case when Assumption (I) is valid}. 

By neglecting the negative terms from the right-hand side of both \eqref{fey-var} and \eqref{fey-var-h0},
it follows that the first assumption in the Opial Lemma (Lemma \ref{opial-var}), when applied in the 
product space ${\cal H}\times {\cal G}\times {\cal G}$, for the sequence 
$(x^k,z^k,y^k)_{k\geq 0}$, for $W^k:= (M_1^k,M_2^k+c\id,c^{-1}\id)$ for $k \geq 0$, and for
$S\subseteq {\cal H}\times {\cal G}\times {\cal G}$ the set of saddle points of the Lagrangian $l$, holds.

Since $M_1^k-\frac{L}{2}\id\in{\cal P}_{\alpha_1}({\cal H})$ for all $k \geq 0$ with $\alpha_1>0$, we get 
\begin{equation}\label{x-x-0} x^k-x^{k+1}\rightarrow 0 \ (k\rightarrow+\infty)
\end{equation}
and \begin{equation}\label{z-Ax-0} z^k-Ax^{k+1}\rightarrow 0 \ (k\rightarrow+\infty).
\end{equation}
A direct consequence of \eqref{x-x-0} and \eqref{z-Ax-0} is 
\begin{equation}\label{z-z-0} z^k-z^{k+1}\rightarrow 0 \ (k\rightarrow+\infty). 
\end{equation}
From \eqref{h-var-y}, \eqref{z-Ax-0} and \eqref{z-z-0} we derive 
\begin{equation}\label{y-y-0} y^k-y^{k+1}\rightarrow 0 \ (k\rightarrow+\infty). 
\end{equation}

The relations \eqref{x-x-0}-\eqref{y-y-0} will play an essential role in the verification of the second assumption in the Opial Lemma. 
Let $(\ol x,\ol z,\ol y)\in {\cal H}\times {\cal G}\times {\cal G}$ be such that 
there exists $(k_n)_{n\geq 0}$, $k_n\rightarrow +\infty$ (as $n\rightarrow +\infty$), and $(x^{k_n},z^{k_n}, y^{k_n})$ converges weakly to 
$(\ol x,\ol z,\ol y)$ (as $n\rightarrow +\infty$). 

From \eqref{x-x-0}  we obtain that $(Ax^{k_n+1})_{n\in\N}$ converges weakly 
to $A\ol x$ (as $n\rightarrow +\infty$), which combined with \eqref{z-Ax-0} yields $\ol z=A\ol x$. 
We use now the following notations for all $n\geq 0$
\begin{align*} 
a_n^* := & \ cA^*(z^{k_n}-Ax^{k_n+1}-c^{-1}y^{k_n})+M_1^{k_n}(x^{k_n}-x^{k_n+1})+\nabla h(x^{k_n+1})-\nabla h(x^{k_n})\\
a_n := & \ x^{k_n+1}\\
b_n^*:= &\  y^{k_n+1}+M_2^{k_n}(z^{k_n}-z^{k_n+1})\\
b_n:= & \ z^{k_n+1}.
\end{align*}
From \eqref{opt-cond-f} and \eqref{opt-cond-g2} we have for all $n\geq 0$
\begin{equation}\label{a-a}a_n^*\in\partial (f+h)(a_n)\end{equation}
and
\begin{equation}\label{b-b}b_n^*\in\partial g(b_n).\end{equation}
Furthermore, from \eqref{x-x-0} we have 
\begin{equation}\label{an-w}a_n \mbox{ converges weakly to } \ol x \ (\mbox{as }n\rightarrow+\infty). 
\end{equation}
From \eqref{y-y-0} and \eqref{z-z-0} we obtain 
\begin{equation}\label{bn-w}b_n^* \mbox{ converges weakly to } \ol y \ (\mbox{as }n\rightarrow+\infty). 
\end{equation}
Moreover, \eqref{h-var-y} and \eqref{y-y-0} yield 
\begin{equation}\label{Aan-bn}Aa_n-b_n \mbox{ converges strongly to } 0 \ (\mbox{as }n\rightarrow+\infty). 
\end{equation}
Finally, we have 
\begin{align*}
a_n^*+A^*b_n^*= & \ cA^*(z^{k_n}-Ax^{k_n+1})+A^*(y^{k_n+1}-y^{k_n})+ \! M_1^{k_n}(x^{k_n}-x^{k_n+1}) +\!A^*M_2^{k_n}(z^{k_n}-z^{k_n+1})\\
& + \nabla h(x^{k_n+1})-\nabla h(x^{k_n}).
\end{align*}
By using the fact that $\nabla h$ is Lipschitz continuous, from \eqref{x-x-0}-\eqref{y-y-0} we get 
\begin{equation}\label{an-Abn}a_n^*+A^*b_n^* \mbox{ converges strongly to } 0 \ (\mbox{as }n\rightarrow+\infty). 
\end{equation}
Taking into account the relations \eqref{a-a}-\eqref{an-Abn} and applying\cite[Proposition 2.4]{a-comb-s} to the operators $\partial (f+h)$ and 
$\partial g$, we conclude that \begin{equation*}-A^*\ol y\in\partial (f+h)(\ol x)=\partial f(\ol x)+\nabla h(\ol x)\mbox{ and }\ol y\in\partial g(A\ol x),\end{equation*}
hence $(\ol x,\ol z,\ol y)=(\ol x,A\ol x,\ol y)$ is a saddle point of the Lagrangian $l$, thus the second assumption of 
the Opial Lemma is verified, too. In conclusion,  $(x^k,z^k,y^k)_{k\geq 0}$ converges weakly to a saddle point of the Lagrangian $l$. 

{\it The case when Assumption (II) is valid}. 

We show that the relations \eqref{x-x-0}-\eqref{y-y-0} are fulfilled also in this case. Indeed, Assumption (II) allows to derive from \eqref{series} that \eqref{z-Ax-0} and \eqref{z-z-0} hold. 
From \eqref{h-var-y}, \eqref{z-Ax-0} and \eqref{z-z-0} we obtain \eqref{y-y-0}. Finally, 
the inequalities 
\begin{align*}\label{ineq}\alpha\|x^{k+1}-x^k\|^2\leq   & \|x^{k+1}-x^k\|_{M_1^k - \frac{L}{2}\id}^2 + \|Ax^{k+1}-Ax^k\|^2 \nonumber \\
\leq & \|x^{k+1}-x^k\|_{M_1^k - \frac{L}{2}\id}^2  + 2\|Ax^{k+1}-z^k\|^2+2\|z^k-Ax^k\|^2 \ \forall k\geq 0\end{align*}
yield \eqref{x-x-0}.

On the other hand, notice that both \eqref{fey-var} and \eqref{fey-var-h0} yield 
\begin{equation}\label{exist-lim}  \exists\lim_{k\rightarrow+\infty}\left(\frac{1}{2}\|x^k-x^*\|_{M_1^k}^2+
\frac{1}{2}\|z^k-z^*\|_{M_2^k+c\id}^2+\frac{1}{2c}\|y^k-y^*\|^2\right),
\end{equation}
hence $(y^k)_{k\geq 0}$ and $(z^k)_{k\geq 0}$ are bounded. Combining this with \eqref{h-var-y} and the condition 
imposed on $M_1^k-\frac{L}{2}\id+A^*A$, we derive that $(x^k)_{k\geq 0}$ is bounded, too. Hence there exists a weakly convergent subsequence 
of $(x^k,z^k,y^k)_{k\geq 0}$. By using the same arguments as in the proof of (I), it follows that
every weak sequential cluster point of $(x^k,z^k,y^k)_{k\geq 0}$ is a saddle point of the Lagrangian $l$. 

Now we show that the set of weak sequential cluster points of $(x^k,z^k,y^k)_{k\geq 0}$ is a singleton. 
Let $(x_1,z_1,y_1),(x_2,z_2,y_2)$ be two such weak sequential cluster points. Then there exist $(k_p)_{p\geq 0}, (k_q)_{q\geq 0}$, 
$k_p\rightarrow+\infty$ (as $p\rightarrow+\infty$), $k_q\rightarrow+\infty$ (as $q\rightarrow+\infty$), a subsequence
$(x^{k_p},z^{k_p}, y^{k_p})_{p \geq 0}$ which converges weakly to $(x_1,z_1,y_1)$ (as $p\rightarrow+\infty$), and a subsequence
$(x^{k_q},z^{k_q}, y^{k_q})_{q \geq 0}$ which converges weakly to $(x_2,z_2,y_2)$ (as $q\rightarrow+\infty$). As seen, $(x_1,z_1,y_1)$ and 
$(x_2,z_2,y_2)$ are saddle points of the Lagrangian $l$ and $z_i=Ax_i$ for $i\in\{1,2\}$. From \eqref{exist-lim}, which is true for every 
saddle point of the Lagrangian $l$, we derive 
\begin{equation}\label{exist-lim1}
\exists\lim_{k\rightarrow+\infty}\left(E(x^k,z^k,y^k; x_1,z_1,y_1)-E(x^k,z^k,y^k; x_2,z_2,y_2)\right),
\end{equation}
where, for $(x^*,z^*,y^*)$ the expression $E(x^k,z^k,y^k; x^*,z^*,y^*)$ is defined as 
$$E(x^k,z^k,y^k; x^*,z^*,y^*)=\frac{1}{2}\|x^k-x^*\|_{M_1^k}^2+
\frac{1}{2}\|z^k-z^*\|_{M_2^k+c\id}^2+\frac{1}{2c}\|y^k-y^*\|^2.$$
Further, we have for all $k \geq 0$
$$\frac{1}{2}\|x^k-x_1\|_{M_1^k}^2-\frac{1}{2}\|x^k-x_2\|_{M_1^k}^2=\frac{1}{2}\|x_2-x_1\|_{M_1^k}^2
+\langle x^k-x_2,M_1^k(x_2-x_1)\rangle,$$
$$\frac{1}{2}\|z^k-z_1\|_{M_2^k+c\id}^2\!\!-\frac{1}{2}\|z^k-z_2\|_{M_2^k+c\id}^2 \!\!=\frac{1}{2}\|z_2-z_1\|_{M_2^k+c\id}^2
\!\!+\langle z^k-z_2, (M_2^k+c\id)(z_2-z_1)\rangle,$$
and
$$\frac{1}{2c}\|y^k-y_1\|^2-\frac{1}{2c}\|y^k-y_2\|^2=\frac{1}{2c}\|y_2-y_1\|^2
+\frac{1}{c}\langle y^k-y_2, y_2-y_1\rangle.$$
Applying \cite[Th\'{e}or\`{e}ème 104.1]{rn}, there exists $M_1\in{\cal S_+}({\cal H})$ such that 
$(M_1^k)_{k \geq 0}$ converges to $M_1$ in the strong operator topology, i.e., $\|M_1^k x - M_1 x\| \to 0$ for all $x \in \mathcal{H}$ (as $k \to +\infty$). Similarly, 
the monotonicity condition imposed on $(M_2^k)_{k \geq 0}$ implies that $\sup_{k\geq 0}\|M_2^k+c\id\|<+\infty$. 
Thus, according to \cite[Lemma 2.3]{combettes-vu2013}, there exists $\alpha'>0$  and $M_2\in {\cal P}_{\alpha'}({\cal G})$ 
such that $(M_2^k+c\id)_{k \geq 0}$ converges to $M_2$ in the strong operator topology (as $k\rightarrow+\infty$). 

Taking the limit in \eqref{exist-lim1} along the subsequences $(k_p)_{p\geq 0}$ and $(k_q)_{q\geq 0}$ and using the last three relations 
above we obtain 
$$\frac{1}{2}\|x_1-x_2\|_{M_1}^2+\langle x_1-x_2,M_1(x_2-x_1)\rangle+ \frac{1}{2}\|z_1-z_2\|_{M_2}^2+\langle z_1-z_2, M_2(z_2-z_1)\rangle$$
$$+\frac{1}{2c}\|y_1-y_2\|^2+\frac{1}{c}\langle y_1-y_2, y_2-y_1\rangle
=\frac{1}{2}\|x_1-x_2\|_{M_1}^2+\frac{1}{2}\|z_1-z_2\|_{M_2}^2+\frac{1}{2c}\|y_1-y_2\|^2,$$
hence $$-\|x_1-x_2\|_{M_1}^2-\|z_1-z_2\|_{M_2}^2-\frac{1}{c}\|y_1-y_2\|^2=0.$$
From here we get $\|x_1-x_2\|_{M_1} =0$, $z_1=z_2$ and $y_1=y_2$. Since
$$\left(\alpha + \frac{L}{2}\right) \|x_1-x_2\|^2 \leq  \|x_1-x_2\|_{M_1}^2 + \|Ax_1-Ax_2\|^2,$$
we obtain that $x_1=x_2$. In conclusion, $(x^k,z^k,y^k)_{k\geq 0}$ converges weakly to a saddle point of the Lagrangian $l$. 

{\it The case when Assumption (III) is valid}. 

Under Assumption (III) we can further refine the inequalities in \eqref{fey-var} and \eqref{fey-var-h0}. Let $k\geq 1$ be fixed. By considering the relation \eqref{opt-cond-g2} for consecutive iterates 
and by taking into account the monotonicity of $\partial g$ we derive 
$$\langle z^{k+1}-z^k,y^{k+1}-y^k+M_2^k(z^k-z^{k+1})-M_2^{k-1}(z^{k-1}-z^k)\rangle\geq 0,$$
hence 
\begin{align}\label{coniter}
\langle z^{k+1}-z^k,y^{k+1}-y^k\rangle & \geq \|z^{k+1}-z^k\|_{M_2^k}^2+\langle z^{k+1}-z^k,M_2^{k-1}(z^{k-1}-z^k)\rangle \nonumber\\
& \geq \|z^{k+1}-z^k\|_{M_2^k}^2-\frac{1}{2}\|z^{k+1}-z^k\|_{M_2^{k-1}}^2-\frac{1}{2}\|z^{k}-z^{k-1}\|_{M_2^{k-1}}^2.
\end{align}
Using that $y^{k+1}-y^k=c(Ax^{k+1}-z^{k+1})$, the last inequality yields
\begin{align}
 \|z^{k+1}-z^k\|_{M_2^k}^2-\frac{1}{2}\|z^{k+1}-z^k\|_{M_2^{k-1}}^2-\frac{1}{2}\|z^{k}-z^{k-1}\|_{M_2^{k-1}}^2 & \leq\nonumber\\
 \frac{c}{2}\left(\|z^k-Ax^{k+1}\|^2-\|z^{k+1}-z^k\|^2-\|Ax^{k+1}-z^{k+1}\|^2\right).\label{ineq-aha}
\end{align}

In case $L>0$, adding \eqref{ineq-aha} and \eqref{fey-var} leads to
\begin{align*}\frac{1}{2}\|x^{k+1}-x^*\|_{M_1^{k+1}}^2+\frac{1}{2}\|z^{k+1}-Ax^*\|_{M_2^{k+1}+c\id}^2+\frac{1}{2c}\|y^{k+1}-y^*\|^2 +
\frac{1}{2}\|z^{k+1}-z^k\|_{3M_2^k-M_2^{k-1}}^2& \leq \nonumber\\
 \frac{1}{2}\|x^{k}-x^*\|_{M_1^{k}}^2+\frac{1}{2}\|z^k-Ax^*\|_{M_2^k+c\id}^2+\frac{1}{2c}\|y^k-y^*\|^2+\frac{1}{2}\|z^k-z^{k-1}\|_{M_2^{k-1}}^2&\nonumber\\
 -\frac{1}{2}\|x^{k+1}-x^k\|_{M_1^k-\frac{L}{2}\id}^2
 -\frac{c}{2}\|z^{k+1}-z^k\|^2 - \frac{1}{2c}\|y^{k+1}-y^k\|^2&\nonumber\\
 -L\left\|L^{-1}\left(\nabla h (x^*)-\nabla h (x^k)\right)+\frac{1}{2}\left(x^k-x^{k+1}\right)\right\|^2. 
\end{align*}
Taking into account that, according to Assumption (III), $3M_2^k-M_2^{k-1}\succcurlyeq M_2^k$, we can conclude that for all $k \geq 1$ it holds
\begin{align}\frac{1}{2}\|x^{k+1}-x^*\|_{M_1^{k+1}}^2+\frac{1}{2}\|z^{k+1}-Ax^*\|_{M_2^{k+1}+c\id}^2+\frac{1}{2c}\|y^{k+1}-y^*\|^2 +
\frac{1}{2}\|z^{k+1}-z^k\|_{M_2^k}^2& \!\!\leq \nonumber\\
 \frac{1}{2}\|x^{k}-x^*\|_{M_1^{k}}^2+\frac{1}{2}\|z^k-Ax^*\|_{M_2^k+c\id}^2+\frac{1}{2c}\|y^k-y^*\|^2+\frac{1}{2}\|z^k-z^{k-1}\|_{M_2^{k-1}}^2&\nonumber\\
 -\frac{1}{2}\|x^{k+1}-x^k\|_{M_1^k-\frac{L}{2}\id}^2-\frac{c}{2}\|z^{k+1}-z^k\|^2 - \frac{1}{2c}\|y^{k+1}-y^k\|^2.& \label{fey-var-M10-III'}
\end{align}

Similarly, in case $L=0$ we obtain 
\begin{align}\frac{1}{2}\|x^{k+1}-x^*\|_{M_1^{k+1}}^2+\frac{1}{2}\|z^{k+1}-Ax^*\|_{M_2^{k+1}+c\id}^2+\frac{1}{2c}\|y^{k+1}-y^*\|^2 +
\frac{1}{2}\|z^{k+1}-z^k\|_{M_2^k}^2& \!\!\leq \nonumber\\
 \frac{1}{2}\|x^{k}-x^*\|_{M_1^{k}}^2+\frac{1}{2}\|z^k-Ax^*\|_{M_2^k+c\id}^2+\frac{1}{2c}\|y^k-y^*\|^2+\frac{1}{2}\|z^k-z^{k-1}\|_{M_2^{k-1}}^2&\nonumber\\
 -\frac{1}{2}\|x^{k+1}-x^k\|_{M_1^k}^2-\frac{c}{2}\|z^{k+1}-z^k\|^2 - \frac{1}{2c}\|y^{k+1}-y^k\|^2.& \label{fey-var-h0-M10-III'}
\end{align}
Using telescoping sum arguments, we obtain that $\|x^{k+1}-x^k\|_{M_1^k-\frac{L}{2}\id} \rightarrow 0$, $y^k - y^{k+1} \rightarrow 0$ 
and $z^k - z^{k+1} \rightarrow 0$ as $k \rightarrow +\infty$. Using \eqref{h-var-y}, it follows that
$A(x^{k}-x^{k+1}) \rightarrow 0$ as $k \rightarrow +\infty$, which, combined with the fact that  $M_1^k-\frac{L}{2}\id + A^*A \in {\cal P}_{\alpha}({\cal {H}})$, for all $k \geq 0$, yields $x^{k}-x^{k+1} \rightarrow 0$ as $k \rightarrow +\infty$.
Consequently, $z^k - Ax^{k+1} \rightarrow 0$ as $k \rightarrow +\infty$. Hence, the relations \eqref{x-x-0}-\eqref{y-y-0} are fulfilled. 
On the other hand, from both \eqref{fey-var-M10-III'} and \eqref{fey-var-h0-M10-III'} we derive 
\begin{equation*}
\exists\lim_{k\rightarrow+\infty}\left(\frac{1}{2}\|x^{k}-x^*\|_{M_1^{k}}^2+\frac{1}{2}\|z^k-Ax^*\|_{M_2^k+c\id}^2+\frac{1}{2c}\|y^k-y^*\|^2+\frac{1}{2}\|z^k-z^{k-1}\|_{M_2^{k-1}}^2\right).
\end{equation*}
By using that 
$$\|z^k-z^{k-1}\|_{M_2^{k-1}}^2 \leq \|z^k-z^{k-1}\|_{M_2^{0}}^2 \leq \|M_2^0\| \|z^k-z^{k-1}\|^2 \ \forall k \geq 1,$$
it follows that $\lim_{k\rightarrow+\infty} \|z^k-z^{k-1}\|_{M_2^{k-1}}^2 = 0$, which further implies that \eqref{exist-lim} holds. 
From here the conclusion follows by arguing as in the proof provided  in the setting of Assumption (II).
\end{proof}

\begin{remark} Choosing as in Remark \ref{rem-cond}, $M_1^k= \frac{1}{\tau_k}\id - cA^*A$, with $\tau_k >0$ and such that $\tau:=\sup_{k \geq 0} \tau_k \in \R$, and $M_2^k = 0$ for all $k \geq 0$, we have
$$\left\langle x,\left(M_1^k-\frac{L}{2}\id\right)x\right\rangle \geq \left(\frac{1}{\tau_k}-c\|A\|^2-\frac{L}{2}\right)\|x\|^2  \geq \left(\frac{1}{\tau}-c\|A\|^2-\frac{L}{2}\right)\|x\|^2 \ \forall x \in {\cal H},$$
which means that under the assumption $\frac{1}{\tau}-c\|A\|^2>\frac{L}{2}$ 
(which recovers the one in Algorithm 3.2 and Theorem 3.1 in \cite{condat2013}), the operators $M_1^k-\frac{L}{2}\id$ 
belong for all $k \geq 0$ to the class ${\cal P}_{\alpha_1}({\cal H})$, with $\alpha_1:= \frac{1}{\tau}-c\|A\|^2-\frac{L}{2}>0$. 
\end{remark}

\begin{remark} By taking $h=0$ and $L=0$, and in each iteration constant operators $M_1^k = M_1 \succcurlyeq 0$ and $M_2^k = M_2 \succcurlyeq 0$ for all $k \geq 0$, Theorem \ref{cong-it} in the context of Assumption (I) covers the first situation investigated in \cite[Theorem 5.6]{shefi-teboulle2014}, where in finite dimensional spaces the matrix $M_1$ was assumed to be positive definite and the matrix $M_2$ to be positive semidefinite. 

The arguments used in \cite[Theorem 5.6]{shefi-teboulle2014} for proving convergence in the case 
when $M_1=0$ and $A$ has full column rank contain flaws and rely on incorrect statements. Theorem  
\ref{cong-it} provides in the context of Assumption (III) (for $h=0$, $L=0$, $M_1^k=0$ and $M_2^k = M_2 \succcurlyeq 0$ for all $k \geq 0$)  the correct proof of this result.

Finally, we notice that the convergence theorem for the iterates of the classical ADMM algorithm (which corresponds to the situation when 
$h=0$,  $L=0$, $M_1=M_2=0$ and $A$ has full column rank, see for example 
\cite{ecb}) is covered by Theorem \ref{cong-it} in the context of Assumption (III).
\end{remark}


\end{document}